\newtheorem{theorem}{Theorem}[section]
\newtheorem{prop}[theorem]{Proposition}
\newtheorem{defn}[theorem]{\rm\textsc{Definition}}
\newtheorem{lem}[theorem]{Lemma}
\newtheorem{coro}[theorem]{Corollary}
\newtheorem{thm}[theorem]{Theorem}
\newtheorem{rem}[theorem]{\rm\textsc{Remark}}
\newtheorem{exam}[theorem]{\rm\textsc{Example}}
\newcommand{\ideal}[1]{\ensuremath{\left\langle #1 \right\rangle}}
\DeclareMathOperator{\GL}{GL}
\DeclareMathOperator{\dia}{diag}
\DeclareMathOperator{\SL}{SL}
\DeclareMathOperator{\tr}{tr}
\DeclareMathOperator{\Tr}{Tr}
\newcommand{\B}{\mathcal{B}} 
\newcommand{\C}{\mathbb{C}} 
\newcommand{\Z}{\mathbb{Z}} 
\newcommand{\N}{\mathbb{N}} 
\newcommand{\g}{\mathfrak{g}} 
\newcommand{\h}{\mathfrak{h}} 
\newcommand{\sll}{\germ{sl}} 
\newcommand{\gl}{\germ{gl}} 
\newcommand{\ut}{\germ{u}} 
\newcommand{\ra}{\longrightarrow}
\newcommand{\hbo}{$\hfill\Diamond$}
\begin{document}
\title{Invariant theory and coefficient algebras\\ of Lie algebras} 
\def\shorttitle{Invariant theory and coefficient algebras of Lie algebras}

\author{Yin Chen}
\address{School of Mathematics and Physics, Key Laboratory of ECOFM of 
Jiangxi Education Institute, Jinggangshan University,
Ji'an 343009, Jiangxi, China \& Department of Finance and Management Science, University of Saskatchewan, Saskatoon, SK, Canada, S7N 5A7}
\email{yin.chen@usask.ca}

\author{Runxuan Zhang}
\address{Department of Mathematics and Information Technology, Concordia University of Edmonton, Edmonton, AB, Canada, T5B 4E4}
\email{runxuan.zhang@concordia.ab.ca}

\begin{abstract}
The coefficient algebra of a finite-dimensional Lie algebra on a finite-dimensional representation is defined as the subalgebra generated by all coefficients of the corresponding characteristic polynomial. We explore connections between classical invariant theory and the coefficient algebras of finite-dimensional complex Lie algebras on some representations. Specifically, we prove that with respect to any symmetric power of the standard representation: (a) the coefficient algebra of the upper triangular solvable complex Lie algebra is isomorphic to the ring of symmetric polynomials; (b) the coefficient algebra of the general linear complex Lie algebra is isomorphic to the invariant ring of the general linear group with the conjugacy action on the full space of matrices; and (c) the coefficient algebra of the special linear complex Lie algebra can be generated by classical trace functions. As an application, we determine the characteristic polynomial of the special linear complex Lie algebra on its standard representation. 
\end{abstract}

\date{\today}
\thanks{2020 \emph{Mathematics Subject Classification}. 13A50; 17B05;  15A15.}
\keywords{Lie algebras; coefficient algebras; invariant rings; characteristic polynomials.}
\maketitle \baselineskip=16pt

\dottedcontents{section}[1.16cm]{}{1.8em}{5pt}
\dottedcontents{subsection}[2.00cm]{}{2.7em}{5pt}

\section{Introduction}
\setcounter{equation}{0}
\renewcommand{\theequation}
{1.\arabic{equation}}
\setcounter{theorem}{0}
\renewcommand{\thetheorem}
{1.\arabic{theorem}}

\subsection{Background} The characteristic polynomial of a square matrix undoubtedly occupies a central position among the various branches of modern algebra. The theory of characteristic polynomial of multiple square matrices dates back to the works of Dedekind and Frobenius on group determinants of finite nonabelian groups in the 19th century. More precisely, the group determinant of a finite group $G=\{1,g_1,\dots,g_n\}$ is a homogeneous polynomial in $x_0,x_1,\dots,x_n$ defined as
\begin{equation}
\label{eq1.1}
\upvarphi_G:=\det(x_0\cdot I+x_1\cdot \upvarphi(g_1)+\cdots+x_n\cdot \upvarphi(g_n))
\end{equation}
where $I$ denotes the identity matrix and $\upvarphi$ denotes the left regular representation of $G$. Factoring the group determinant into a product of irreducible polynomials and exploring connections between group determinants and the group structure theory are two fundamental tasks in this area, which paved the way for the representation theory of finite groups. 

The group determinant  of a finite group $G$ can be generalized to the notion of characteristic polynomial of $G$ on any finite-dimensional  representation $V$ via replacing the left regular representation $\upvarphi$ in (\ref{eq1.1}) by the  
group homomorphism from $G$ to $\GL(V)$. Recently, an analogue of  characteristic polynomials in Lie theory has been introduced by \cite{HZ19} and has captured the attention of many researchers; see for example, \cite{CCD19, JL22}, and \cite{GLW24}.
One of our motivational examples was \cite[Theorem 2]{HZ19} which provides a sufficient and necessary condition for when Lie algebras are solvable via properties of their characteristic polynomials. 

To better understand the characteristic polynomial of a Lie algebra $\g$, we shall introduce a concept of the coefficient algebra of $\g$ on a finite-dimensional representation $V$. The primary objective in this article is to determine the structures of the coefficient algebras for several important Lie algebras on their representations. Our approach exposes some unexpected connections  between classical invariant theory, characteristic polynomials, and coefficient algebras of finite-dimensional complex Lie algebras. 

\subsection{Characteristic polynomials and coefficient algebras}  

Let $k$ be a field of any characteristic and $\g$ be a finite-dimensional Lie algebra over $k$. Suppose $V$ denotes an  $m$-dimensional representation of $\g$ and let us fix a basis $\{g_1,g_2,\dots,g_n\}$ for $\g$. We denote by $[g_i]$ the $m\times m$ resulting matrix of $g_i$ on $V$ for $i=1,2,\dots,n$, and write $I$ for the identity matrix of degree $m$.  The \textit{characteristic polynomial} of $\g$ on $V$ with respect to the basis $\{g_1,g_2,\dots,g_n\}$ is defined as the following homogeneous polynomial of degree $m$ in $n+1$ variables $x_0,x_1,\dots,x_n$:
\begin{equation}
\label{ }
\upvarphi_{\g}(V):=\det\left(x_0\cdot I+x_1\cdot [g_1]+\cdots+x_n\cdot [g_n]\right).
\end{equation}
Note that this definition is well-defined because it is independent of the choice of basis in $V$; see for example, \cite[Section 2]{HZ19}.  Understanding the relationship between the characteristic polynomial of a Lie algebra and its underlying Lie algebraic structure is a fundamental and significant problem. Numerous studies have examined this topic through approaches such as character theory in Lie algebra representations (\cite{KKSW24}), the study of eigen-varieties, and Poincar\'e polynomials of Lie algebras (\cite{AKY21}). Some interesting results have been derived. For example, 
a Lie algebra $\g$ is solvable if and only if $\upvarphi_{\g}(V)$ is completely reducible for all $V$ (see \cite[Theorem 5.1]{HZ19}); and a solvable Lie algebra $\g$ is nilpotent if and only if the corresponding Poincar\'e polynomial is the constant 1 (see \cite[Corollary 6.1]{AKY21}). For further applications of characteristic polynomials, see \cite{HY24}.

In this article, we take a new perspective to understand the characteristic polynomial $\upvarphi_{\g}(V)$. In fact, since
$\upvarphi_{\g}(V)$ also can be reviewed as a monic polynomial of degree $m$ in the single variable $x_0$ over the polynomial ring $k[x_1,\dots,x_n]$, we can write
\begin{equation}
\label{ }
\upvarphi_{\g}(V)=x_0^m+c_1\cdot x_0^{m-1}+\cdots+c_{m-1}\cdot x_0+c_m
\end{equation}
for some homogeneous polynomials $c_1,c_2,\dots,c_m\in k[x_1,\dots,x_n]$. Essentially, understanding the polynomial $\upvarphi_{\g}(V)$ is, in a sense, equivalent to determining these coefficients $c_1,c_2,\dots,c_m$ and their relationships. This philosophy leads us to introduce the following concept. 

\begin{defn}{\rm
The $k$-subalgebra generated by $\{c_1,c_2,\dots,c_m\}$ in $k[x_1,\dots,x_n]$ is called the \textit{coefficient algebra} of $\g$ on $V$ with respect to the basis $\{g_1,g_2,\dots,g_n\}$, and is denoted by $B_{\g}(V):=k[c_1,c_2,\dots,c_m]$.
}\end{defn}

The main purpose of this article is to determine the algebraic structure of the coefficient algebra $B_{\g}(V)$ for several well-known finite-dimensional linear Lie algebras over the complex field $\C$ on all symmetric powers of these standard representations. The structure of the coefficient algebra $B_{\g}(V)$ will further enhance our understanding of the characteristic polynomial $\upvarphi_{\g}(V)$ and the Lie representation $(\g,V)$.

\subsection{Main results} 

To articulate our main results, we assume that $2\leqslant n\in\N^+$ and we shall explicitly characterize the coefficient algebras of the upper triangular matrix Lie algebra $\ut_n(\C)$, the general linear Lie algebra  $\gl_n(\C)$, and the special linear Lie algebra $\sll_n(\C)$. These three Lie algebras play fundamental roles in the representation theory of finite-dimensional complex  Lie algebras. The reason for this might stem from the following important facts: (1) (Lie's Theorem) any solvable Lie algebra $\h$ could be viewed as a subalgebra of $\ut_n(\C)$ for some $n$ (see \cite[Corollary 2.3]{Car05});  (2) (Ado's Theorem) any Lie algebra $\g$ also could be viewed as a subalgebra of $\gl_n(\C)$ under some representations (see \cite[Section 6.2]{Jac79}); and (3) $\sll_n(\C)$ provides a significant family of finite-dimensional complex simple Lie algebras (\cite[Theorem 4.25]{Car05}). 

Let $d\in\N^+$, $V=\C^n$ be the standard representation of $\gl_n(\C)$,  and let $S^d(\C^n)$ denote the $d$th symmetric power of $\C^n$. Thus for any Lie subalgebra $\g\subseteq \gl_n(\C)$, $S^d(\C^n)$ is also a finite-dimensional $\g$-representation. In particular, we identify $S^1(\C^n)$ with $\C^n$ itself. Define 
$$\{E_{ij}\mid 1\leqslant i,j\leqslant n\}$$
to be the standard basis for $\gl_n(\C)$, where $E_{ij}$ denotes the $n\times n$ matrix whose $(i,j)$-entry is 1 and others 0. We may choose some linear combinations of $E_{ij}$ to form a basis of $\g$.

The first main result is the following. 

\begin{thm}\label{thm1}
The coefficient algebra $B_{\ut_n(\C)}(S^d(\C^n))$ is isomorphic to the ring of symmetric polynomials $\C[x_1,x_2,\dots,x_n]^{S_n}$, where $S_n$ denotes the symmetric group of degree $n$ with the standard permutation action on $\{1,2,\dots,n\}$.
\end{thm}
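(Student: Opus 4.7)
The plan is to fix the basis $\{E_{ij}:1\le i\le j\le n\}$ of $\ut_n(\C)$, with coordinate variables $x_{ij}$, and write $t_i:=x_{ii}$. The generic element $M:=\sum_{i\le j}x_{ij}E_{ij}$ is upper triangular on $\C^n$ with diagonal $(t_1,\dots,t_n)$; a direct calculation shows that its derivation action on $S^d(\C^n)$, written in the monomial basis $\{e_1^{d_1}\cdots e_n^{d_n}:\sum d_i=d\}$ ordered by decreasing reverse-lexicographic order, is still upper triangular, with diagonal entry $\ell_\mu:=\sum_{i=1}^n d_i\,t_i$ on the basis vector indexed by $\mu=(d_1,\dots,d_n)$. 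Consequently
\[
\upvarphi_{\ut_n(\C)}(S^d(\C^n))=\prod_{|\mu|=d}(x_0+\ell_\mu),
\]
so the strictly upper triangular variables $x_{ij}$ ($i<j$) drop out entirely and every coefficient $c_k=e_k(\ell_\mu)$ lies in the subring $\C[t_1,\dots,t_n]$.

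Permuting the $t_i$ permutes the multi-indices $\mu$ and hence the linear forms $\ell_\mu$, so each $c_k$ is $S_n$-symmetric in $t_1,\dots,t_n$, giving the easy inclusion $B_{\ut_n(\C)}(S^d(\C^n))\subseteq\C[t_1,\dots,t_n]^{S_n}$. For the reverse inclusion, it suffices to show that the $n$ lowest-degree coefficients $c_1,c_2,\dots,c_n$ are algebraically independent over $\C$: since $c_k$ is homogeneous of degree $k$, the subalgebra $\C[c_1,\dots,c_n]$ would then be a polynomial ring whose graded Hilbert series $\prod_{k=1}^n(1-T^k)^{-1}$ matches that of $\C[t_1,\dots,t_n]^{S_n}$, forcing equality.

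For the algebraic independence, my approach is to split off the $n$ ``pure'' weights $\ell_{d e_i}=d\,t_i$ via the factorization
\[
\upvarphi=\Bigl(\prod_{i=1}^n(x_0+d\,t_i)\Bigr)\cdot G(x_0,t),\qquad G(x_0,t):=\prod_{\mu\ne d e_i\text{ for all }i}(x_0+\ell_\mu).
\]
Let $g_j\in\C[t_1,\dots,t_n]^{S_n}$ be the coefficient of $x_0^{(N-n)-j}$ in $G$; expanding the product yields $c_k=\sum_{i=0}^k d^i e_i(t)\,g_{k-i}$, and writing $g_k=\alpha_k\,e_k(t)+(\text{polynomial in }e_1(t),\dots,e_{k-1}(t))$ for $1\le k\le n$ rearranges this into the triangular form
\[
c_k=(d^k+\alpha_k)\,e_k(t)+(\text{polynomial in }e_1(t),\dots,e_{k-1}(t)).
\]
A simple induction on $k$ then extracts $e_k(t)$ from $c_1,\dots,c_k$ as soon as $d^k+\alpha_k\ne 0$ for every $k\le n$. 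The main obstacle is verifying this nonvanishing; I expect to handle it by a direct combinatorial analysis of the coefficient of $e_k(t)$ in the $e$-basis expansion of $c_k$, exploiting the nonnegativity of the multi-index entries, or, failing that, by checking that the Jacobian $\det(\partial c_i/\partial t_j)_{i,j=1}^n$ is a nonzero polynomial via evaluation at a suitably nonsymmetric point, since symmetric specializations like $t=(1,\dots,1)$ force the Jacobian to be singular for trivial rank reasons.
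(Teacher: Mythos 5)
Your overall route coincides with the paper's up to the decisive step: both arguments show that the generic element of $\ut_n(\C)$ acts triangularly on the ordered monomial basis of $S^d(\C^n)$, so that $\upvarphi_{\ut_n(\C)}(S^d(\C^n))=\prod_{|\mu|=d}(x_0+\ell_\mu)$ with the strictly upper triangular variables $x_{ij}$ ($i<j$) absent, and both deduce $B_{\ut_n(\C)}(S^d(\C^n))\subseteq\C[t_1,\dots,t_n]^{S_n}$ from the $S_n$-stability of the multiset of linear forms $\ell_\mu$. Your reduction of the reverse inclusion to the algebraic independence of $c_1,\dots,c_n$ via the Hilbert series comparison is also sound. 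The only structural difference is the choice of basis for the final extraction: you expand in the elementary symmetric polynomials and need $d^k+\alpha_k\neq 0$, while the paper expands in the power sums and needs the coefficient $a_k$ of $p_k$ in $c_k$ to be nonzero. These are equivalent conditions: applying the algebra homomorphism that sends $e_1,\dots,e_{k-1}$ to $0$ and using Newton's identities gives $d^k+\alpha_k=(-1)^{k-1}k\,a_k$.

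The genuine gap is that you never prove $d^k+\alpha_k\neq 0$, and this inequality \emph{is} the reverse inclusion; everything preceding it only yields one containment. "I expect to handle it by a direct combinatorial analysis \dots or, failing that, by checking that the Jacobian \dots is a nonzero polynomial via evaluation at a suitably nonsymmetric point" is a plan, not an argument: no positivity statement is established and no evaluation point is exhibited. Be warned that the most naive positivity idea fails in the $e$-basis: the coefficient of the monomial $t_1t_2\cdots t_k$ in $c_k$ is visibly positive, but that monomial receives contributions from every $e_\lambda$ with $\lambda\vdash k$, so its positivity says nothing about $\alpha_k$; already for $n=d=2$ one has $c_2=2p_2+8\upepsilon_2=4p_1^2-2p_2=2\upepsilon_1^2+4\upepsilon_2$, so signs genuinely appear in these expansions and the nonvanishing is not formal. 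The paper closes the analogous step by fixing the graded lexicographic order with $t_1>\cdots>t_n$ and exploiting that $c_k=\upepsilon_k(\{\ell_\mu\})$ is an elementary symmetric polynomial in linear forms with nonnegative integer coefficients, so that the coefficient of $t_1^k$ is $\upepsilon_k$ of the nonnegative first entries $d_1^{(\mu)}$ and hence positive, which it then uses to pin down a nonzero $p_k$-component. Whichever normalization you prefer, an argument of that kind (for instance, reducing $d^k+\alpha_k$ via Newton's identities to the single power sum $\sum_\mu\ell_\mu^k$ and controlling it under the specialization killing $e_1,\dots,e_{k-1}$) must actually be carried out; as written, your proof stops exactly where it would have to begin.
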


To state the second result, we consider the general linear group $\GL_n(\C)$ of degree $n$ over $\C$ and the conjugacy action of $\GL_n(\C)$ on the underlying space of $\gl_n(\C)$. The corresponding invariant ring
$\C[\gl_n(\C)]^{\GL_n(\C)}$ is well-understood in classical invariant theory, which is also a polynomial algebra over $\C$. 
Our second result reveals an unexpected connection between the coefficient algebra of $\gl_n(\C)$ on $S^d(\C^n)$ and
the invariant ring $\C[\gl_n(\C)]^{\GL_n(\C)}$.

\begin{thm}\label{thm2}
The coefficient algebra $B_{\gl_n(\C)}(S^d(\C^n))$ is isomorphic to  $$\C[\gl_n(\C)]^{\GL_n(\C)}=\C[x_{ij}\mid 1\leqslant i,j\leqslant n]^{\GL_n(\C)}=\C[s_1,s_2,\dots,s_n],$$ where $s_i$ denotes the classical conjugacy invariant of degree $i$ in $x_{ij}$.
\end{thm}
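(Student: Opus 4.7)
The proof naturally splits into two inclusions between $B := B_{\gl_n(\C)}(S^d(\C^n))$ and $\C[\gl_n(\C)]^{\GL_n(\C)} = \C[s_1,\dots,s_n]$, both viewed as subalgebras of $\C[x_{ij} \mid 1 \le i,j \le n]$.

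For $B \subseteq \C[\gl_n(\C)]^{\GL_n(\C)}$, I would use $\GL_n(\C)$-equivariance of the representation. The adjoint action $X \mapsto gXg^{-1}$ on $\gl_n(\C)$ intertwines with conjugation by $\rho(g) \in \GL(S^d(\C^n))$ (where $\rho$ is the natural $\GL_n(\C)$-representation on $S^d(\C^n)$), so $[gXg^{-1}] = \rho(g)[X]\rho(g)^{-1}$. Since $\det(x_0 I + [X])$ is conjugation-invariant, each coefficient $c_k$ is a $\GL_n(\C)$-invariant polynomial in the $x_{ij}$'s, hence lies in $\C[s_1,\dots,s_n]$.

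For the reverse inclusion I would invoke Chevalley restriction: the map $\C[\gl_n(\C)]^{\GL_n(\C)} \to \C[\h]^{S_n} = \C[s_1,\dots,s_n]$, obtained by restriction to the diagonal Cartan $\h$, is an isomorphism. It therefore suffices to show that $s_k$ lies in the restricted subring $B|_\h$ for every $1 \le k \le n$. For $X = \dia(\lambda_1,\dots,\lambda_n)$, the derivation action on the monomial basis $\{e_1^{a_1}\cdots e_n^{a_n} : |a|=d\}$ of $S^d(\C^n)$ is diagonal with eigenvalues $L_a(\lambda) := \sum_i a_i \lambda_i$, whence
\[
c_k|_\h \;=\; e_k\bigl(\{L_a(\lambda)\}_{|a|=d}\bigr)
\]
is an $S_n$-symmetric polynomial of total degree $k$ in $\lambda$. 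I would proceed by induction on $k$: the base case $c_1|_\h = \tfrac{md}{n}\, s_1$ (with $m = \dim S^d(\C^n)$) follows from the symmetry $\sum_{|a|=d} a_i = md/n$, giving $s_1 \in B|_\h$. For the inductive step, write $c_k|_\h = \alpha_k\, s_k + R_k(s_1,\dots,s_{k-1})$ with $R_k$ of weighted $s$-degree $k$; since $R_k \in B|_\h$ by induction, we conclude $s_k \in B|_\h$ as soon as $\alpha_k \ne 0$.

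The key technical obstacle is thus verifying $\alpha_k \ne 0$ for $k = 1,\dots,n$. I would extract $\alpha_k$ by the specialization $\lambda_i = \zeta^{i-1}$ for $1 \le i \le k$ and $\lambda_i = 0$ for $i > k$, where $\zeta$ is a primitive $k$-th root of unity: since $\{1,\zeta,\dots,\zeta^{k-1}\}$ are the roots of $x^k - 1$, Vieta's formulas give $s_j|_{\text{subst}} = 0$ for $j \ne k$ and $s_k|_{\text{subst}} = (-1)^{k-1}$, so $\alpha_k = (-1)^{k-1}\, c_k|_{\text{subst}}$. Under this specialization the cyclic group $C_k$ acts on the multi-indices by shifting their first $k$ coordinates, sending $L_a \mapsto \zeta L_a$; orbits with nontrivial stabilizer have $L_a = 0$, while each free $C_k$-orbit of size $k$ contributes a factor $1 + (-1)^{k+1}L_a^k\, t^k$ to the generating polynomial $\prod_{|a|=d}(1 + t L_a)$, so $c_k|_{\text{subst}}$ equals a signed sum of $k$-th powers of free-orbit representatives. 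Non-vanishing is immediate for $k=1,2$ (one computes $\alpha_1 = md/n$ and $\alpha_2 = \tfrac12\sum_{|a|=d}(a_1 - a_2)^2 > 0$ since $a = d\,e_1$ has $a_1 \ne a_2$), while for $k \ge 3$ one must rule out cancellation among the complex values $L_a^k$ via a careful combinatorial analysis of the free orbits, which I expect to be the most delicate step.
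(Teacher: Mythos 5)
Your overall architecture matches the paper's. The containment $B_{\gl_n(\C)}(S^d(\C^n))\subseteq\C[x_{ij}]^{\GL_n(\C)}$ via $\GL_n(\C)$-equivariance of $X\mapsto[X]$ is exactly the paper's first step, and your reverse containment follows the same template as the paper's: restrict the coefficients to a smaller variable set, write the $k$-th coefficient as a nonzero scalar times the $k$-th standard generator plus a polynomial in the earlier generators, and induct. Your restriction to the diagonal Cartan $\h$ is in substance the paper's projection $\uppi$ to the upper-triangular case (where only the diagonal survives in the determinant, by Lemmas 3.2 and 3.3), and your formula $c_k|_{\h}=e_k(\{L_a\})$ is the paper's product formula. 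Your orbit bookkeeping under the cyclic shift is also correct: stabilized multi-indices do give $L_a=0$, and each free orbit contributes $1+(-1)^{k-1}t^kL_a^k$.

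The gap is the one you flag yourself: the non-vanishing of $\alpha_k$ for $3\leqslant k\leqslant n$. Your entire second half funnels into the claim that $\sum_{|a|=d}L_a(\lambda)^k\neq 0$ under the specialization $\lambda_i=\zeta^{i-1}$, equivalently that the sum of $L_a^k$ over free $C_k$-orbits does not cancel; for $k\geqslant 3$ these are genuinely complex numbers of varying argument, so this is not a positivity statement, and you give no proof. As written your argument establishes the theorem only for $n\leqslant 2$. The paper closes this step by a different device: it takes the trace power sums $\Tr_k=\Tr(X^k)$ rather than the $s_k$ as target generators, fixes the graded lexicographic order with $x_{11}>x_{22}>\cdots$, and argues that the leading monomial of the coefficient $z_k$ is $x_{11}^k$ (transported from the proof of Theorem 1 through $\uppi$), matching the leading monomial of $\Tr_k$; this yields $z_k=a_k\Tr_k+Q(\Tr_1,\dots,\Tr_{k-1})$ with $a_k\neq 0$ and avoids the root-of-unity combinatorics entirely. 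If you want to finish along your own lines, switch from $e_k(\lambda)$ to $p_k(\lambda)$ as the target generator (harmless by Newton's identities in characteristic zero) and extract its coefficient by a leading-term comparison as the paper does, rather than by the specialization $\lambda_i=\zeta^{i-1}$, which converts an inductive triangularity statement into a hard cancellation problem.
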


Suppose that $\SL_n(\C)$ denotes the special linear group of degree $n$ over $\C$ acting on the underlying space of $\sll_n(\C)$ in the conjugacy way. The third main result below computes the coefficient algebra of $\sll_n(\C)$ on all symmetric powers $S^d(\C^n)$.

\begin{thm}\label{thm3}
The coefficient algebra $B_{\sll_n(\C)}(S^d(\C^n))$ is isomorphic to 
$\C[\sll_n(\C)]^{\SL_n(\C)}$, and it is also a polynomial algebra generated by the trace functions $\tr_2,\tr_3,\dots,\tr_n$.
\end{thm}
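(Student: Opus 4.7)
The plan is to derive Theorem \ref{thm3} as a corollary of Theorem \ref{thm2} by passing from $\gl_n(\C)$ to its codimension-one subalgebra $\sll_n(\C)$. First I would fix a basis of $\sll_n(\C)$ consisting of the off-diagonal matrix units $E_{ij}$ for $i\neq j$ together with the Cartan generators $H_i:=E_{ii}-E_{i+1,i+1}$ for $i=1,\dots,n-1$, yielding variables $y_{ij}$ and $y_1,\dots,y_{n-1}$. Under the inclusion $\sll_n(\C)\hookrightarrow\gl_n(\C)$ this corresponds to the linear substitution $x_{ij}=y_{ij}$ for $i\neq j$, $x_{11}=y_1$, $x_{ii}=y_i-y_{i-1}$ for $2\leqslant i\leqslant n-1$, and $x_{nn}=-y_{n-1}$, which precisely enforces the trace-zero condition $x_{11}+\cdots+x_{nn}=0$, equivalently $s_1=0$.

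With this substitution, $\upvarphi_{\sll_n(\C)}(S^d(\C^n))$ is obtained from $\upvarphi_{\gl_n(\C)}(S^d(\C^n))$ by specializing $s_1\mapsto 0$. Theorem \ref{thm2} expresses the coefficients of the latter as polynomials in $s_1,\ldots,s_n$, so the coefficients of the former lie in $\C[s_2,\ldots,s_n]$; thus $B_{\sll_n(\C)}(S^d(\C^n))\subseteq\C[s_2,\ldots,s_n]$. By Newton's identities, $\C[s_2,\ldots,s_n]=\C[\tr_2,\ldots,\tr_n]$, and classical invariant theory identifies this ring with $\C[\sll_n(\C)]^{\SL_n(\C)}$, a free polynomial algebra in $n-1$ generators.

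The main obstacle is the reverse inclusion: showing that each $\tr_k$ (equivalently, each $s_k$ for $2\leqslant k\leqslant n$) is reproduced inside the coefficient algebra on $S^d(\C^n)$, rather than merely on the standard representation. I would diagonalize a generic traceless element $X=\dia(\lambda_1,\ldots,\lambda_n)$ with $\sum_i\lambda_i=0$ and analyze the factorization
\[
\upvarphi_{\sll_n(\C)}(S^d(\C^n))\big|_{X}=\prod_{1\leqslant i_1\leqslant\cdots\leqslant i_d\leqslant n}\bigl(x_0-\lambda_{i_1}\cdots\lambda_{i_d}\bigr)
\]
coming from the action of $X$ on the standard monomial basis of $S^d(\C^n)$. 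Extracting each power sum $\sum_i\lambda_i^k$ from these symmetric coefficients (subject to $\sum_i\lambda_i=0$) should amount to a linear-algebra argument parallel to the one carried out for $\gl_n(\C)$ in Theorem \ref{thm2}. A transcendence-degree comparison then promotes the inclusion to equality and forces algebraic independence of $\tr_2,\ldots,\tr_n$, establishing both the isomorphism with $\C[\sll_n(\C)]^{\SL_n(\C)}$ and the polynomial-ring structure.
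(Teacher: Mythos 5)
Your first half coincides with the paper's: fix the basis $\{E_{ij},\,E_{ss}-E_{s+1,s+1}\}$ of $\sll_n(\C)$, note that the induced substitution is a $\C$-algebra homomorphism $\uprho$ (fixing $x_0$) that carries the generic matrix of $\gl_n(\C)$ to that of $\sll_n(\C)$ and hence carries $\upvarphi_{\gl_n(\C)}(S^d(\C^n))$ to $\upvarphi_{\sll_n(\C)}(S^d(\C^n))$, and deduce the forward inclusion. But you have misplaced the difficulty: the reverse inclusion, which you call the main obstacle and propose to attack by a fresh diagonalization and leading-term analysis, is automatic. Since $\uprho$ sends the coefficient $z_i$ to the coefficient $w_i$, the coefficient algebra of $\sll_n(\C)$ is literally the image of the coefficient algebra of $\gl_n(\C)$, and the image of a subalgebra generated by a set is generated by the images of that set; so by Theorem \ref{thm2} and $\tr_1=\uprho(\Tr_1)=0$,
\[
B_{\sll_n(\C)}(S^d(\C^n))=\C[w_1,\dots,w_m]=\uprho\bigl(\C[z_1,\dots,z_m]\bigr)=\uprho\bigl(\C[\Tr_1,\dots,\Tr_n]\bigr)=\C[\tr_2,\dots,\tr_n].
\]
Both inclusions come for free, and only the identification of $\C[\tr_2,\dots,\tr_n]$ with $\C[\sll_n(\C)]^{\SL_n(\C)}$ remains, which is the classical fact you (and the paper) cite. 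This one-line observation is exactly the paper's proof.

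The step you do sketch contains a concrete error. For diagonal $X=\dia(\lambda_1,\dots,\lambda_n)$, the Lie algebra acts on $S^d(\C^n)$ by the derivation rule (\ref{eq3.5}), so the eigenvalue on $e_{i_1}\cdots e_{i_d}$ is the \emph{sum} $\lambda_{i_1}+\cdots+\lambda_{i_d}$ (cf.\ Lemma \ref{lem3.2}), not the product $\lambda_{i_1}\cdots\lambda_{i_d}$; the multiplicative factorization you wrote is what the induced action of the \emph{group} $\GL_n(\C)$ would give. Your plan to extract the power sums from that factorization therefore starts from a false identity. Finally, the closing transcendence-degree comparison cannot promote an inclusion of subalgebras to an equality (e.g.\ $\C[x^2]\subsetneq\C[x]$ have equal transcendence degree), so it does no work: one really must exhibit the generators inside the coefficient algebra, which is precisely what the image argument above supplies without further computation.
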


This theorem leads to an immediate application (Corollary \ref{coro4.5}), which determines the characteristic polynomial $\upvarphi_{\sll_n(\C)}(\C^n)$ of $\sll_n(\C)$ on the standard representation $\C^n$.  

\subsection{Layout and conventions} 
In Section \ref{sec2},  we begin with computing the coefficient algebra of $\sll_2(\C)$ on any representation $V$ (Corollary \ref{coro2.3}) and provide a perspective from invariant theory to understand the characteristic polynomials of a Lie algebra on its self-dual representations. We also prove that a Lie algebra is nilpotent if and only if the coefficient algebra on the adjoint representation is $\C$; see Proposition \ref{prop2.7}. Section \ref{sec3} is devoted to the proof of Theorem \ref{thm1}. We provide a detailed explanation on actions of Lie algebras on symmetric powers and several examples to illustrate the crucial role of classical invariant theory in determining characteristic polynomials and coefficient algebras. In Section \ref{sec4}, we prove Theorems \ref{thm2} and \ref{thm3}, calculating the coefficient algebras of $\gl_n(\C)$ and $\sll_n(\C)$ on all symmetric powers $S^d(\C^n)$ of  the standard representation $\C^n$.

Throughout this paper, $k$ denotes a field of any characteristic (unless otherwise specified), $\C$ denotes the complex field, and we write $\N$ for the set of all nonnegative integers and $\N^+$ for the set of all positive integers. 
For an element $A\in\g$, we denote by $[A]$ the resulting matrix of $A$ on a $\g$-representation provided no confusion arises. All representations are assumed to be finite-dimensional.  The symbol $k[V]^G=k[x_1,x_2,\dots,x_n]^G$ is standard in the invariant theory and it denotes the invariant ring of a group $G$ with respect to a representation $V$, where $x_1,x_2,\dots,x_n$ form a basis of the dual space $V^*$; see for example, \cite{CW11} or \cite{DK15} for a general reference on algebraic invariant theory  and  see \cite{CDG20, CT19, CZ23a, CZ23b} and \cite{CZ24} for some recent applications of invariant theory in Lie algebras and other  topics.

\section{$\sll_2(\C)$ and Nilpotent Lie Algebras} \label{sec2}
\setcounter{equation}{0}
\renewcommand{\theequation}
{2.\arabic{equation}}
\setcounter{theorem}{0}
\renewcommand{\thetheorem}
{2.\arabic{theorem}}

\noindent A new theory (concept) in Lie algebras usually begins with the study on $\sll_2(\C)$ and its representations; see \cite{CCZ21, Che25}, and \cite[Section 5]{CZ23b}. The characteristic polynomials of $\sll_2(\C)$ on all finite-dimensional representations have been explicitly computed in \cite[Theorem 4]{CCD19} and \cite{JL22}. Based on their calculations, this section completely determines the coefficient algebras of $\sll_2(\C)$  on any representations and of nilpotent Lie algebras on their adjoint representations. Additionally, we provide a viewpoint from invariant theory to understand the characteristic polynomials of a Lie algebra on its self-dual representations. 

\subsection{The Lie algebra $\sll_2(\C)$} \label{sec2.1}
Let's begin with setting $\g=\sll_2(\C)$ and fixing a basis of $\sll_2(\C)$ throughout this subsection: 
\begin{equation}
\label{eq2.1}
e_1=\begin{pmatrix}
   0   & 1   \\
    0  & 0 
\end{pmatrix},e_2=\begin{pmatrix}
   -1   & 0   \\
    0  & 1
\end{pmatrix}, e_3=\begin{pmatrix}
   0   & 0   \\
    1  & 0 
\end{pmatrix}.
\end{equation}
Suppose $V=\C^2$ denotes the standard $2$-dimensional representation of $\sll_2(\C)$ over $\C$.  The characteristic polynomial of $\sll_2(\C)$ on $\C^2$ is
\begin{eqnarray*}
\upvarphi_{\sll_2(\C)}(\C^2)&=&\det\left(x_0\cdot \begin{pmatrix}
   1   & 0   \\
    0  & 1
\end{pmatrix}+x_1\cdot \begin{pmatrix}
   0   & 1   \\
    0  & 0 
\end{pmatrix}+x_2\cdot \begin{pmatrix}
   -1   & 0   \\
    0  & 1
\end{pmatrix}+x_3\cdot \begin{pmatrix}
   0   & 0   \\
    1  & 0 
\end{pmatrix}\right)\\
&=&\det\begin{pmatrix}
  x_0 -x_2   &x_1    \\
   x_3   & x_0+x_2 
\end{pmatrix}\\
&=&x_0^2-(x_2^2+x_1x_3). 
\end{eqnarray*}
Hence, the coefficient algebra $B_{\sll_2(\C)}(\C^2)=\C[x_2^2+x_1x_3]$ is a polynomial subalgebra of $\C[x_1,x_2,x_3]$ with Krull dimension 1.

Moreover, this statement still holds if $V$ is any faithful irreducible representation of $\sll_2(\C)$.

\begin{prop}\label{prop2.1}
If $V$ is a faithful irreducible representation of $\sll_2(\C)$, then
$B_{\sll_2(\C)}(V)$ is a polynomial subalgebra of $\C[x_1,x_2,x_3]$ generated by $x_2^2+x_1x_3$.
\end{prop}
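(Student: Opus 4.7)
The plan is to reduce the statement to the case of symmetric power representations and then extract the coefficient algebra directly from the eigenvalues of a generic element. By the classical representation theory of $\sll_2(\C)$, every finite-dimensional irreducible representation is isomorphic to $V_d := S^d(\C^2)$ of dimension $d+1$, and $V_d$ is faithful precisely when $d\geqslant 1$. So it suffices to prove the proposition for $V=V_d$ with $d\geqslant 1$.

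I would next examine the generic element $X := x_1e_1+x_2e_2+x_3e_3 = \sm{-x_2}{x_1}{x_3}{x_2}$ viewed in $\sll_2(\C)\otimes \C[x_1,x_2,x_3]$. Its two eigenvalues on $\C^2$ are $\pm\lambda$ with $\lambda^2 = x_2^2+x_1x_3$ (this already recovers the base case in Section~\ref{sec2.1}). Working in a quadratic extension in which $\lambda$ lives (or equivalently on the Zariski-dense open set where $\lambda \neq 0$ so that $X$ is diagonalizable with distinct eigenvalues), the standard weight-space decomposition of $V_d$ shows that the eigenvalues of $[X]$ on $V_d$ are exactly $(d-2i)\lambda$ for $i=0,1,\dots,d$. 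Consequently
$$\upvarphi_{\sll_2(\C)}(V_d) \;=\; \prod_{i=0}^{d}\bigl(x_0+(d-2i)\lambda\bigr),$$
an identity first verified after extending scalars by $\lambda$ but whose two sides are polynomials in $\C[x_0,x_1,x_2,x_3]$, hence hold there.

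The key observation is now that the multiset $\{d-2i : 0\leqslant i \leqslant d\}$ is symmetric about $0$, so all odd elementary symmetric polynomials in these numbers vanish while the even ones pick up only even powers of $\lambda$. Writing $z := x_2^2+x_1x_3$, each coefficient $c_j$ in (the analogue of) the expansion takes the form $c_j = \alpha_j\,z^{j/2}$ for even $j$ with $\alpha_j\in\C$, and $c_j=0$ for odd $j$. This yields $B_{\sll_2(\C)}(V_d)\subseteq\C[z]$. For the reverse inclusion, I would compute $\alpha_2 = -\tfrac{1}{2}\sum_{i=0}^{d}(d-2i)^2$, which is a nonzero scalar for every $d\geqslant 1$, so $c_2 = \alpha_2 z$ already generates $\C[z]$. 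Since $z=x_2^2+x_1x_3$ is transcendental over $\C$, the ring $\C[z]$ is a polynomial algebra in one variable, completing the proof.

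The only mildly delicate point is justifying the eigenvalue identity on $V_d$: because $\sqrt{x_2^2+x_1x_3}$ is not an element of $\C[x_1,x_2,x_3]$, one must be explicit about working in a splitting extension or about specializing to generic triples $(x_1,x_2,x_3)$ and extending the polynomial identity by Zariski density. Beyond this bookkeeping, no real obstacle arises, and the argument can equivalently be extracted from the explicit characteristic polynomial formulas of \cite{CCD19} and \cite{JL22}.
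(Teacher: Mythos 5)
Your proof is correct, but it takes a genuinely different route from the paper's. The paper treats \cite[Theorem 4]{CCD19} as a black box: that result already asserts that $\upvarphi_{\sll_2(\C)}(V)$ factors as $x_0^{\ell_0}\prod_i\bigl(x_0^2-a_i(x_2^2+x_1x_3)\bigr)$ with $a_i\in\N$, faithfulness forces all $a_i>0$, and the reverse inclusion is then read off from the second coefficient $(a_1+\cdots+a_s)(x_2^2+x_1x_3)$. You instead re-derive the factorization from first principles: identifying the faithful irreducible with $S^d(\C^2)$, diagonalizing the generic element $X$ with eigenvalues $\pm\lambda$, $\lambda^2=x_2^2+x_1x_3$, and using the Leibniz action on symmetric powers to get eigenvalues $(d-2i)\lambda$ on $V_d$; the symmetry of the multiset $\{d-2i\}$ under negation then kills the odd coefficients and makes the even ones scalar multiples of powers of $z=x_2^2+x_1x_3$. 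Your Zariski-density justification of the eigenvalue identity is sound (both sides are polynomials once the right-hand product is paired into factors $x_0^2-(d-2i)^2z$), and your extraction of $z$ from $c_2=-\tfrac{1}{2}\sum_{i=0}^d(d-2i)^2\cdot z$ is the same closing move as the paper's, with the advantage that the nonvanishing of the scalar is explicit rather than inherited from the positivity of the $a_i$. What your argument buys is self-containedness and explicit constants ($a_i=(d-2i)^2$), essentially reproving the needed portion of the cited theorem; what the paper's buys is brevity and a form of the factorization that applies verbatim to arbitrary (not necessarily irreducible) representations, which is what Corollary \ref{coro2.3} then exploits.
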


\begin{proof}
By \cite[Theorem 4]{CCD19}, we see that the characteristic polynomial $\upvarphi_{\sll_2(\C)}(V)$ can be written as 
a product of either $x_0$ or $x_0^2-a\cdot (x_2^2+x_1x_3)$ for some $a\in\N$. Since $V$ is faithful, it follows that
$$\upvarphi_{\sll_2(\C)}(V)=x_0^{\ell_0}\cdot \Big(x_0^2-a_1\cdot (x_2^2+x_1x_3)\Big)\cdots \Big(x_0^2-a_s\cdot (x_2^2+x_1x_3)\Big)$$
for some $\ell_0\in\N$ and $s,a_1,a_2,\dots,a_s\in\N^+$.  For $1\leqslant i\leqslant s$, let us define 
$$y_i:= a_i\cdot (x_2^2+x_1x_3).$$
Recall that the coefficients of a polynomial equation can be expressed as
elementary symmetric polynomials in their roots. Thus 
$$\upvarphi_{\sll_2(\C)}(V)=x_0^{\ell_0}\cdot \Big(x_0^{2s}-\upepsilon_1(y_1,\cdots, y_s)\cdot x_0^{2s-2}+\cdots+(-1)^s \upepsilon_s(y_1,\cdots, y_s)\Big)$$
where $\upepsilon_i$ denotes the elementary symmetric polynomial of degree $i$ for all $i\in\{1,\dots, s\}$. Note that each $y_i\in \C[x_2^2+x_1x_3]$ as $a_i\neq 0$. Hence, $B_{\sll_2(\C)}(V)\subseteq\C[x_2^2+x_1x_3]$. 

Conversely, note that $\upepsilon_1(y_1,\cdots, y_s)=y_1+y_2+\cdots+y_s$, thus the second coefficient of $\upvarphi_{\sll_2(\C)}(V)$
is $(a_1+\cdots+a_s)\cdot (x_2^2+x_1x_3).$ Hence,
$$(a_1+\cdots+a_s)\cdot (x_2^2+x_1x_3)\in B_{\sll_2(\C)}(V).$$
Since $a_1,a_2,\dots,a_s$ are all positive, it follows that $a_1+\cdots+a_s\neq 0$ and so
$$x_2^2+x_1x_3\in B_{\sll_2(\C)}(V)$$ because the characteristic of $\C$ is zero. This proves that 
$B_{\sll_2(\C)}(V)=\C[x_2^2+x_1x_3]$. 
\end{proof}

\begin{rem}\label{rem2.2}{\rm
Recall that the trivial one-dimensional representation $V_1$ of $\sll_2(\C)$ is the unique irreducible non-faithful representation. In particular, $\upvarphi_{\sll_2(\C)}(V_1)=x_0$ and $B_{\sll_2(\C)}(V_1)=\C$.
\hbo}\end{rem}

\begin{coro}\label{coro2.3}
Let $V$ be a finite-dimensional representation of $\sll_2(\C)$. Then
$B_{\sll_2(\C)}(V)$ is either $\C$ or the polynomial subalgebra of $\C[x_1,x_2,x_3]$ generated by $x_2^2+x_1x_3$.
\end{coro}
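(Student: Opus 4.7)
The plan is to bootstrap from Proposition~\ref{prop2.1} and Remark~\ref{rem2.2} by invoking Weyl's complete reducibility theorem for the semisimple Lie algebra $\sll_2(\C)$. First I would decompose $V$ into irreducibles,
$$V\;\cong\;V_1^{\oplus\ell_0}\oplus W_1\oplus\cdots\oplus W_t,$$
where $V_1$ denotes the trivial one-dimensional representation and each $W_i$ is a nontrivial (hence faithful, since $\sll_2(\C)$ is simple) irreducible. By multiplicativity of the characteristic polynomial over direct sums,
$$\upvarphi_{\sll_2(\C)}(V)\;=\;x_0^{\ell_0}\prod_{i=1}^{t}\upvarphi_{\sll_2(\C)}(W_i),$$
so the computation of $B_{\sll_2(\C)}(V)$ reduces to the two extreme cases that are already resolved in Section~\ref{sec2.1}.

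If $t=0$, then $\sll_2(\C)$ acts as zero on $V$, so $\upvarphi_{\sll_2(\C)}(V)=x_0^{\dim V}$ and $B_{\sll_2(\C)}(V)=\C$, matching the first alternative. If $t\geqslant 1$, I would apply Proposition~\ref{prop2.1} to each $W_i$ (equivalently, \cite[Theorem~4]{CCD19} directly) to rewrite the product globally in the form
$$\upvarphi_{\sll_2(\C)}(V)\;=\;x_0^{L}\prod_{j=1}^{s}\bigl(x_0^{2}-a_j(x_2^{2}+x_1x_3)\bigr)$$
for some $L\in\N$, $s\geqslant 1$, and $a_j\in\N^+$, where at least one $a_j$ is strictly positive because Proposition~\ref{prop2.1} gives $B_{\sll_2(\C)}(W_i)=\C[x_2^2+x_1x_3]\neq\C$ for every nontrivial $W_i$. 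This places the argument in exactly the setting treated inside the proof of Proposition~\ref{prop2.1}, and its elementary-symmetric-polynomial computation then applies verbatim: every coefficient of $\upvarphi_{\sll_2(\C)}(V)$ lies in $\C[x_2^2+x_1x_3]$, and the $x_0^{\dim V-2}$-coefficient $(a_1+\cdots+a_s)(x_2^2+x_1x_3)$ is a nonzero $\C$-multiple of $x_2^2+x_1x_3$, producing the generator in $B_{\sll_2(\C)}(V)$.

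No serious obstacle remains; the corollary is essentially a clean packaging of Proposition~\ref{prop2.1} with Weyl's theorem. The only point requiring care is ensuring that $\sum_j a_j\neq 0$ in the $t\geqslant 1$ case, so that the trace-level coefficient does not accidentally vanish. This is immediate from the contribution of any single nontrivial irreducible summand together with $\mathrm{char}(\C)=0$, and it guarantees that the dichotomy ``trivial versus nontrivial action'' is the only coarse invariant of $V$ on which $B_{\sll_2(\C)}(V)$ depends.
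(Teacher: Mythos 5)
Your proposal is correct and follows essentially the same route as the paper: decompose $V$ via complete reducibility, use multiplicativity of $\upvarphi_{\g}$ over direct sums, and reduce to Proposition~\ref{prop2.1} and Remark~\ref{rem2.2}. Your extra care about $\sum_j a_j\neq 0$ simply re-runs the argument already carried out inside the proof of Proposition~\ref{prop2.1}, so nothing new is needed.
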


\begin{proof}  
By the definition of characteristic polynomials, given two representations $V$ and $W$ of $\g$, we see that  
 $\upvarphi_{\g}(V\oplus W)=\upvarphi_{\g}(V)\cdot \upvarphi_{\g}(W)$. Proposition \ref{prop2.1} and Remark \ref{rem2.2}, together with the fact that $\sll_2(\C)$ is completely reducible (i.e., each representation can be written as the direct sum of some irreducible representations), imply that $B_{\sll_2(\C)}(V)=\C$ if $V\cong V_1\oplus\cdots\oplus V_1$; otherwise  $B_{\sll_2(\C)}(V)=\C[x_2^2+x_1x_3]$.
\end{proof}

\begin{rem}\label{rem2.4}{\rm
More generally, the base change in a Lie algebra $\g$ induces a degree-preserving algebraic action on the polynomial ring $k[x_0,x_1,\dots,x_n]$. Specifically, if $P$ denotes the transition matrix between two bases of $\g$, then
$$\widetilde{P}:=\begin{pmatrix}
   1   & 0   \\
     0 & P^\top
\end{pmatrix}$$
can linearly transform $\{x_0,x_1,\dots,x_n\}$ to a new generating set $\{x_0,x_1',\dots,x_n'\}$ of $k[x_0,x_1,\dots,x_n]$ and then can be extended to a $k$-algebraic action on $k[x_0,x_1,\dots,x_n]$. Under this action, the characteristic polynomial with respect to the second basis of $\g$ is exactly equal to the image of the characteristic polynomial with respect to the first basis. 
\hbo}\end{rem}

Together with Proposition \ref{prop2.1}, Corollary \ref{coro2.3}, and Remark \ref{rem2.4} obtains the following result. 

\begin{coro}
Let $\B$ be a basis of $\sll_2(\C)$ and $V$ be a finite-dimensional representation of $\sll_2(\C)$. Then
$B_{\sll_2(\C)}(V)$ (with respect to $\B$) is either $\C$ or isomorphic to the polynomial subalgebra of $\C[x_1,x_2,x_3]$ generated by an irreducible quadratic polynomial.
\end{coro}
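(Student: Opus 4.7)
The plan is to reduce the statement to Corollary \ref{coro2.3} via the base-change mechanism described in Remark \ref{rem2.4}. First I would fix the reference basis $\{e_1,e_2,e_3\}$ of $\sll_2(\C)$ from (\ref{eq2.1}) and let $P$ denote the transition matrix carrying this basis to the arbitrary basis $\B$. By Remark \ref{rem2.4}, the matrix $P$ induces a degree-preserving $\C$-algebra automorphism $\sigma$ of $\C[x_0,x_1,x_2,x_3]$ that fixes $x_0$ and restricts to an invertible linear substitution on $\Span_\C\{x_1,x_2,x_3\}$; moreover, the characteristic polynomial $\upvarphi_{\sll_2(\C)}(V)$ computed in the basis $\B$ is precisely the image under $\sigma$ of the one computed in $\{e_1,e_2,e_3\}$. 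Consequently, each coefficient $c_i$ (with respect to $\B$) is the $\sigma$-image of the corresponding coefficient in the reference basis, so
\[
B_{\sll_2(\C)}(V)_{\B}=\sigma\bigl(B_{\sll_2(\C)}(V)_{\{e_1,e_2,e_3\}}\bigr),
\]
and the two coefficient algebras are isomorphic as $\C$-algebras.

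Next I would apply Corollary \ref{coro2.3}: with respect to $\{e_1,e_2,e_3\}$, the coefficient algebra is either $\C$ or the polynomial $\C$-subalgebra $\C[x_2^2+x_1x_3]\subseteq \C[x_1,x_2,x_3]$. In the first case, $\sigma(\C)=\C$, and we are done. In the second case, $\sigma$ sends $\C[x_2^2+x_1x_3]$ onto $\C[q]$ where $q:=\sigma(x_2^2+x_1x_3)$ is a homogeneous polynomial of degree $2$ in $\C[x_1,x_2,x_3]$ (since $\sigma$ is graded).

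It then remains to check that $q$ is an irreducible quadratic form. This follows from two observations. First, $x_2^2+x_1x_3$ itself is irreducible in $\C[x_1,x_2,x_3]$: any factorization into two linear forms $(ax_1+bx_2+cx_3)(dx_1+ex_2+fx_3)$ forces $ad=cf=0$ and $be=1$, and then $ae+bd=0$ together with $b,e\ne 0$ forces $a=d=0$, contradicting $af+cd=1$. Second, $\sigma$ is a $\C$-algebra automorphism of $\C[x_1,x_2,x_3]$ (it is induced by an invertible linear change of variables), hence it preserves degrees and irreducibility; thus $q=\sigma(x_2^2+x_1x_3)$ is again an irreducible quadratic polynomial. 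Combining these, $B_{\sll_2(\C)}(V)_{\B}$ is either $\C$ or $\C[q]$, a polynomial subalgebra generated by an irreducible quadratic polynomial, which is the desired conclusion.

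The argument is essentially bookkeeping once Corollary \ref{coro2.3} and Remark \ref{rem2.4} are in hand; the only substantive point is the irreducibility of $x_2^2+x_1x_3$, which I expect to be the main (and only) obstacle, but it is resolved cleanly by the short direct factorization analysis above (alternatively, by noting that the associated symmetric bilinear form on $\sll_2(\C)^*$ is nondegenerate, so its quadric is smooth and hence irreducible).
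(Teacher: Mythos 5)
Your proposal is correct and follows exactly the route the paper intends: the paper gives no written proof beyond citing Proposition \ref{prop2.1}, Corollary \ref{coro2.3}, and Remark \ref{rem2.4}, and your argument is precisely the fleshed-out version of that citation, with the base-change automorphism from Remark \ref{rem2.4} transporting $\C[x_2^2+x_1x_3]$ to $\C[q]$. Your added verification that $x_2^2+x_1x_3$ (hence its image $q$ under a graded automorphism) is an irreducible quadratic is a detail the paper leaves implicit, and it is carried out correctly.
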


\subsection{Characteristic polynomials on self-dual representations} 

We will see that a representation $V$ of a Lie algebra and the dual $V^*$ correspond to the same coefficient algebra, through their characteristic polynomials might be different.  

\begin{prop}
Let $\g$ be an $n$-dimensional Lie algebra with a fixed basis $\B$ and $V$ be a finite-dimensional representation of $\g$. If  $V^*$ is the dual representation of $V$, then $B_{\g}(V^*)=B_{\g}(V)$.
\end{prop}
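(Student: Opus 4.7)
The plan is to express $\upvarphi_{\g}(V^*)$ in terms of $\upvarphi_{\g}(V)$ and then extract the relationship between the two sets of coefficients. First I would recall that if $[g_i]$ denotes the matrix of $g_i\in\g$ acting on $V$ with respect to a chosen basis, then the matrix of $g_i$ on the dual representation $V^*$ with respect to the dual basis is $-[g_i]^{\top}$ (this follows from the defining rule $(g\cdot f)(v)=-f(g\cdot v)$ of the dual action). Using $\det(M)=\det(M^{\top})$, this gives
$$\upvarphi_{\g}(V^*)=\det\Bigl(x_0I-\sum_{i=1}^n x_i[g_i]^{\top}\Bigr)=\det\Bigl(x_0I-\sum_{i=1}^n x_i[g_i]\Bigr),$$
so $\upvarphi_{\g}(V^*)$ is precisely the image of $\upvarphi_{\g}(V)$ under the $k$-algebra endomorphism of $k[x_0,x_1,\ldots,x_n]$ that fixes $x_0$ and sends $x_i\mapsto -x_i$ for each $i\geqslant 1$.

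Next I would exploit the homogeneity of $\upvarphi_{\g}(V)$. Since $\upvarphi_{\g}(V)$ is homogeneous of total degree $m$ in $x_0,x_1,\ldots,x_n$, each coefficient $c_i$ of $x_0^{m-i}$ must be homogeneous of degree $i$ in $x_1,\ldots,x_n$. Under the sign-flip endomorphism above, $c_i$ is therefore sent to $(-1)^ic_i$. Writing $\upvarphi_{\g}(V^*)=x_0^m+c_1'x_0^{m-1}+\cdots+c_m'$ and comparing coefficients, we obtain $c_i'=(-1)^ic_i$ for each $1\leqslant i\leqslant m$. Consequently,
$$B_{\g}(V^*)=k[c_1',c_2',\ldots,c_m']=k[-c_1,c_2,\ldots,(-1)^mc_m]=k[c_1,c_2,\ldots,c_m]=B_{\g}(V),$$
which yields the desired equality.

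I do not expect any real obstacle in executing this plan; the argument reduces to two small observations, namely the identification of the dual matrices as $-[g_i]^{\top}$ and the graded decomposition of $\upvarphi_{\g}(V)$ by $x_0$-degree. Once these are in place, the coefficient algebras coincide simply because the two generating sets differ only by signs, so neither containment requires work. It is also worth noting that this proof respects Remark \ref{rem2.4}: since a base change acts linearly on $x_1,\ldots,x_n$, the sign-flip commutes with base change, and the identity $B_{\g}(V^*)=B_{\g}(V)$ is independent of the choice of basis $\B$.
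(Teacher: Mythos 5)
Your proof is correct and takes essentially the same route as the paper: the paper cites \cite[Proposition 2.2]{KKSW24} for the identity $\upvarphi_{\g}(V^*)(x_0,x_1,\dots,x_n)=(-1)^{\dim V}\cdot\upvarphi_{\g}(V)(-x_0,x_1,\dots,x_n)$, which by homogeneity of $\upvarphi_{\g}(V)$ is equivalent to your sign-flip in $x_1,\dots,x_n$, and then concludes exactly as you do that the coefficients agree up to sign. The only difference is that your argument is self-contained, deriving the key identity from the fact that the dual matrices are $-[g_i]^{\top}$ rather than importing it.
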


\begin{proof} 
By \cite[Proposition 2.2]{KKSW24}, we see that the characteristic polynomial 
$$\upvarphi_{\g}(V^*)(x_0,x_1,\dots,x_n)=(-1)^{\dim(V)}\cdot \upvarphi_{\g}(V)(-x_0,x_1,\dots,x_n),$$ which means that up to sign, the two characteristic polynomials $\upvarphi_{\g}(V^*)$ and $\upvarphi_{\g}(V)$ have the same coefficients. Hence, $B_{\g}(V^*)=B_{\g}(V)$.
\end{proof}

We would like to provide a point of view from invariant theory to better understand the characteristic polynomials of a Lie algebra $\g$ on self-dual representations. 

Suppose the characteristic of the ground field $k$ is not 2 and that $V$ is a finite-dimensional self-dual representation of $\g$, i.e., $V^\ast \cong V$. Then
$$\upvarphi_{\g}(V)(x_0,x_1,\dots,x_n)=\upvarphi_{\g}(V^*)(x_0,x_1,\dots,x_n)=(-1)^{\dim(V)}\cdot \upvarphi_{\g}(V)(-x_0,x_1,\dots,x_n).$$

\textsc{Subcase 1.} If $\dim(V)$ is even, then 
\begin{equation}
\label{ }
\upvarphi_{\g}(V)(x_0,x_1,\dots,x_n)=\upvarphi_{\g}(V)(-x_0,x_1,\dots,x_n).
\end{equation}
Namely, the characteristic polynomial $\upvarphi_{\g}(V)$ can be understood as an invariant polynomial under the action of the group $\ideal{\upsigma}$ on $k[x_0,x_1,\dots,x_n]$, where 
$$\upsigma:=\dia\{-1,1,\dots,1\}.$$
Note that the order of $\upsigma$ is 2. Moreover, $\upsigma(x_0)=-x_0$ and $\upsigma(x_i)=x_i$ for all $1=1,2,\dots,n$. Thus
$x_0^2,x_1,\dots,x_n$ form a homogeneous system of parameters by \cite[Lemma 2.6.3]{CW11}. Hence, it follows from \cite[Corollary 3.1.6]{CW11} or \cite[Proposition 16]{Kem96} that the invariant ring $k[x_0,x_1,\dots,x_n]^{\ideal{\upsigma}}$ is generated by $\{x_0^2,x_1,\dots,x_n\}$. This means that $\upvarphi_{\g}(V)$ must be a polynomial in $x_0^2,x_1,\dots,x_n$.

\textsc{Subcase 2.} Suppose $\dim(V)$ is odd and recall that the degree of $\upvarphi_{\g}(V)$ equals $\dim(V)$, we have 
\begin{equation}
\label{ }
\upvarphi_{\g}(V)(x_0,x_1,\dots,x_n)=-\upvarphi_{\g}(V)(-x_0,x_1,\dots,x_n)=\upvarphi_{\g}(V)(x_0,-x_1,\dots,-x_n).
\end{equation}
In other words, the characteristic polynomial $\upvarphi_{\g}(V)$, in this case, can be viewed as an invariant polynomial under the action of the group $\ideal{-\upsigma}$ on $k[x_0,x_1,\dots,x_n]$. 
Note that $k[x_0,x_1,\dots,x_n]^{\ideal{-\upsigma}}$ is a nonmodular invariant ring, thus the Noether's bound theorem (see \cite[Theorem 3.5.1]{CW11}) implies that $k[x_0,x_1,\dots,x_n]^{\ideal{-\upsigma}}$ is generated by invariant polynomials of degree at most $|\ideal{-\upsigma}|=2$. Since $(-\upsigma)(x_0)=x_0$ and $(-\upsigma)(x_i)=-x_i$ for all $1=1,2,\dots,n$, we see that $k[x_0,x_1,\dots,x_n]^{\ideal{-\upsigma}}$ is generated by 
$$\{x_0,x_ix_j\mid 1\leqslant i,j\leqslant n\}.$$
Therefore, $\upvarphi_{\g}(V)$ is a polynomial in $x_0$ and some $x_ix_j$, where $i,j\in\{1,2,\dots,n\}$.

\subsection{Nilpotent Lie algebras} 

We may obtain a sufficient and necessary description for when a complex Lie algebra $\g$ is nilpotent via the coefficient algebra of $\g$ on the adjoint representation.

\begin{prop}\label{prop2.7}
Let $\g$ be an $n$-dimensional complex Lie algebra with any basis and $V$ be the adjoint representation of $\g$. Then
$\g$ is nilpotent if and only if $B_{\g}(V)=\C$.
\end{prop}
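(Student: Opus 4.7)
The plan is to reduce the statement to a pointwise nilpotency condition on $\ad(X)$ for $X\in\g$ and then invoke Engel's theorem. The key bridge is that $B_{\g}(V)=\C$ is equivalent to a polynomial identity for $\upvarphi_{\g}(V)$, which in turn is equivalent (since $\C$ is infinite) to a statement about every element of $\g$.

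First I would unpack what $B_{\g}(V)=\C$ means. Writing $\upvarphi_{\g}(V)=x_0^n+c_1 x_0^{n-1}+\cdots+c_n$, each $c_i$ is homogeneous in $x_1,\dots,x_n$ of degree $i\geqslant 1$, so $c_i\in\C$ forces $c_i=0$. Hence $B_{\g}(V)=\C$ is equivalent to the polynomial identity
$$\det\bigl(x_0\cdot I+x_1\cdot \ad(g_1)+\cdots+x_n\cdot \ad(g_n)\bigr)=x_0^n$$
in $\C[x_0,x_1,\dots,x_n]$, where $\{g_1,\dots,g_n\}$ is the chosen basis of $\g$ and $\ad(g_i)$ acts on $\g$ as an $n\times n$ matrix.

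Next, I would translate this polynomial identity into a condition on individual elements of $\g$. For any $a_1,\dots,a_n\in\C$, specializing $x_i\mapsto a_i$ yields $\det(x_0 I+\ad(X))=x_0^n$ with $X:=a_1 g_1+\cdots+a_n g_n$, which is precisely the statement that $\ad(X)$ is a nilpotent endomorphism of $\g$. Conversely, since $\C$ is infinite, a polynomial in $\C[x_0,x_1,\dots,x_n]$ is determined by its evaluations on $\C^{n+1}$; so if $\ad(X)$ is nilpotent for every $X\in\g$, then both sides of the displayed identity agree after every specialization and the identity holds, giving $B_{\g}(V)=\C$.

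Finally, I would invoke Engel's theorem, which asserts that a finite-dimensional Lie algebra $\g$ is nilpotent if and only if $\ad(X)$ is nilpotent for every $X\in\g$. Combining this characterization with the two reductions above settles both implications. I do not expect a genuine obstacle here; the only delicate point is the bidirectional use of the polynomial identity, which relies on the infinitude (equivalently, zero characteristic and thus density of points) of $\C$ to translate between the algebraic condition ``all nonleading coefficients are constant'' and the Lie-theoretic condition ``every adjoint endomorphism is nilpotent.''
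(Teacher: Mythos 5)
Your argument is correct, but it is not the route the paper takes: the paper's proof of Proposition \ref{prop2.7} is a one-line appeal to \cite{KKSW24}*{Theorem 4.8}, which already asserts that $\g$ is nilpotent if and only if $\upvarphi_{\g}(V)=x_0^n$, leaving only the identification of that condition with $B_{\g}(V)=\C$ (immediate, since each $c_i$ is homogeneous of positive degree $i$, so $c_i\in\C$ forces $c_i=0$ --- a point you correctly make explicit). The remark following the proposition does sketch a self-contained Engel-type argument, but in a different form from yours: it invokes the simultaneous strict-upper-triangularization version of Engel's theorem to produce a single basis in which every adjoint matrix is strictly upper triangular, so that the determinant is visibly $x_0^n$, and then needs Remark \ref{rem2.4} (covariance of the characteristic polynomial under base change) to transport the conclusion to an arbitrary basis. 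You instead use the pointwise form of Engel's theorem ($\g$ is nilpotent if and only if $\ad(X)$ is nilpotent for every $X\in\g$) together with a specialization and Zariski-density argument over the infinite field $\C$ to pass back and forth between the polynomial identity $\det\bigl(x_0\cdot I+\sum_{i=1}^n x_i\cdot\ad(g_i)\bigr)=x_0^n$ and elementwise nilpotency of the adjoint operators. Both arguments are complete; yours is basis-agnostic from the outset and so dispenses with the base-change lemma, while the paper's sketch avoids the density argument by exhibiting one good basis. The only delicate step in your version, the bidirectional use of specialization, is handled correctly: each coefficient $c_i$ vanishes at every point of $\C^n$ and hence vanishes identically.
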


\begin{proof} 
This essentially follows from \cite[Theorem 4.8]{KKSW24}, which states that $\g$ is nilpotent if and only if the corresponding characteristic polynomial is $x_0^n$.
\end{proof}

\begin{rem}{\rm
Proposition \ref{prop2.7} also was proved in \cite[Theorem 4.8]{KKSW24} by using the Engel's theorem. In fact, by Engel's theorem, $\g$ is nilpotent if and only if $\g$ has a basis with respect to which each element of $\g$ on the adjoint representation $V$  is an upper triangular matrix along with diagonal entries 0; see for example, \cite[Corollary 2.11]{Car05}. By Remark \ref{rem2.4}, the latter statement is equivalent to saying that $B_{\g}(V)=\C$. \hbo
}\end{rem}

Moreover, the Lie's theorem (see \cite[Corollary 2.3]{Car05}) is also greatly helpful in computing the
characteristic polynomials and the coefficient algebras of some solvable Lie algebras, and we believe, especially for the cases of high-dimensional  Lie algebras and their representations.

Let us close this section with the following example that illustrates how we can use the Lie's theorem to calculate the
coefficient algebra of a solvable Lie algebra. 

\begin{exam}{\rm
Suppose that $\g$ denotes the 3-dimensional subalgebra of $\gl_3(\C)$ consisting of all matrices of the following form
$$\begin{pmatrix}
   0   & a&b   \\
   -a   &0&c\\
   0&0&0  
\end{pmatrix}$$
where $a,b,c\in\C$. Then $\g$ is a solvable but not nilpotent Lie algebra. Let $V=\C^3$ be the standard representation of $\g$. 
We define
\begin{equation}
\label{ }
X:=\begin{pmatrix}
  0 & x_1&x_2   \\
   -x_1   &0&x_3\\
   0&0&0
\end{pmatrix}
\end{equation}
A direct calculation shows  that the characteristic polynomial
$$\upvarphi_{\g}(\C^3)=\det (x_0\cdot I+X)=\det\begin{pmatrix}
   x_0   & x_1&x_2   \\
   -x_1   &x_0&x_3\\
   0&0&x_0 
\end{pmatrix}=x_0^3+x_1^2\cdot x_0.$$
Hence, the coefficient algebra $B_{\g}(\C^3)=\C[x_1^2]$. 

On the other hand, using Lie's theorem we may prove that there exists an upper triangular matrix in $\gl_3(\C[x_1,x_2,x_3])$ such that it is similar with $X$. By \cite[Corollary 16.4]{Bro93}, the matrix $x_0\cdot I+X$ must be equivalent to an upper triangular matrix over $\C[x_1,x_2,x_3][x_0]$. In other words, after some elementary row/column transformations over $\C[x_0,x_1,x_2,x_3]$, the matrix $x_0\cdot I+X$ could be upper triangularized.   
Suppose that $U\in \gl_3(\C[x_0,x_1,x_2,x_3])$ denotes an equivalent upper triangular matrix of $x_0\cdot I+X$. Then there exist two products $P,Q$ of elementary matrices such that $$U=P\cdot (x_0\cdot I+X)\cdot Q.$$ Hence,
$$\det(x_0\cdot I+X)=\frac{\det(U)}{\det(P)\cdot \det(Q)}.$$
In our case, we let $P_1$ be the transformation of multiplying the first row with $x_1$, 
$P_2$ the transformation of multiplying the second row with $x_0$, and $P_3$ be the transformation of adding the first row to the second row. Setting $P=P_3P_2P_1$ and $Q=I$, we obtain
$$U=\begin{pmatrix}
   x_0x_1   & x_1^2&x_1x_2   \\
   0   &x_0^2+x_1^2&x_0x_3+x_1x_2\\
   0&0&x_0 
\end{pmatrix}.$$
Therefore, $\det(x_0\cdot I+X)=\frac{\det(U)}{\det(P)\cdot \det(Q)}=x_0^3+x_1^2\cdot x_0$, and  $B_{\g}(\C^3)=\C[x_1^2]$.\hbo
}\end{exam}

\section{Coefficient Algebras of $\ut_n(\C)$} \label{sec3}
\setcounter{equation}{0}
\renewcommand{\theequation}
{3.\arabic{equation}}
\setcounter{theorem}{0}
\renewcommand{\thetheorem}
{3.\arabic{theorem}}

\noindent This section and next section will work over the complex field $\C$ and we assume that $2\leqslant n\in\N^+$. 
The goal of this section is to determine the coefficient algebra of the upper triangular matrix Lie algebra $\ut_n(\C)$ on all symmetric power $S^d(\C^n)$ induced by the standard representation $\C^n$. 

\subsection{Actions on  $S^d(V)$} 

Suppose that $\C^n$ denotes the $n$-dimensional standard representation of the general linear Lie algebra $\gl_n(\C)$ and $S^d(\C^n)$ denotes the $d$th symmetric power representation induced by $\C^n$ for $d\in\N^+$. We identify $S^1(\C^n)$ with $\C^n$ and fix the standard basis  $\{E_{ij}\mid 1\leqslant i,j\leqslant n\}$ for $\gl_n(\C)$. 

Let $\g$ be a subalgebra of $\gl_n(\C)$. Clearly, $\C^n$ is also the standard representation of $\g$.
Suppose that $\{e_1,e_2,\dots,e_n\}$ denotes the standard basis of $\C^n$. Then the following set
\begin{equation}
\label{eq3.1}
\B:=\left\{e_1^{d_1}e_2^{d_2}\cdots e_n^{d_n}\mid d_1+d_2+\cdots+d_n=d,\textrm{ all }d_i\in\N\right\}
\end{equation}
is a basis of $S^d(\C^n)$; see \cite[Exercise B.12, page 476]{FH91}.  This basis $\B$ also can be described as
\begin{equation}
\label{ }
\B=\left\{e_{i_1}e_{i_2}\cdots e_{i_d}\mid 1\leqslant i_1\leqslant i_2\leqslant \dots\leqslant i_d\leqslant n\right\}.
\end{equation}
Thus
\begin{equation}
\label{ }
\dim(S^d(\C^n))=\binom{n+d-1}{d}.
\end{equation}
We endow $S^d(\C^n)$ with the lexicographic monomial ordering with $e_1>e_2>\cdots >e_n$. With this ordering, the basis $\B$ becomes ordered and it can be written as follows:
\begin{equation}
\label{eq3.4}
\B=\left\{e_1^d,e_1^{d-1}e_2,\dots,e_1^{d-1}e_n,e_1^{d-2}e_2^2,\dots,e_n^d\right\}.
\end{equation}

Moreover, the action of $\g$ on $\C^n$ can be extended as a $\g$-representation on $S^d(\C^n)$ given by
\begin{equation}
\label{eq3.5}
g\cdot(v\cdot w):=g(v)\cdot w+v\cdot g(w)
\end{equation}
for $g\in\g, v$ and $w\in S^d(\C^n)$. In particular, for all $e_{i_1}e_{i_2}\cdots e_{i_d}\in \B$, we have
\begin{equation}
\label{eq3.6}
g\cdot (e_{i_1}e_{i_2}\cdots e_{i_d})=\sum_{s=1}^d e_{i_1}\cdots e_{i_s-1}\cdot g(e_{i_s})\cdot e_{i_s+1}\cdots e_{i_d}.
\end{equation}
Hence, together with the fact that $E_{ij}(e_s)=\updelta_{si}\cdot e_j$, we see that
\begin{equation}
\label{eq3.7}
E_{ij}\cdot (e_{i_1}e_{i_2}\cdots e_{i_d})=\sum_{s=1}^d \updelta_{i_s,i}\cdot  e_{i_1}\cdots e_{i_s-1}\cdot e_j\cdot e_{i_s+1}\cdots e_{i_d}
\end{equation}
where $\updelta_{ij}$ denotes the Kronecker delta.

We denote by $[E_{ij}]$ the resulting matrix of $E_{ij}$ on $S^d(\C^n)$ with respect to the ordered basis $\B$ in (\ref{eq3.4}) if no confusion arises.

\begin{exam}\label{exam3.1}
{\rm
Suppose that $n=d=2$. Then $\dim(S^2(\C^2))=3$ and $\{e_1^2,e_1e_2,e_2^2\}$ is the ordered basis of $S^2(\C^2)$. 
Consider the basis elements of $\gl_2(\C)$:
$$E_{11}=\begin{pmatrix}
   1   & 0   \\
   0   & 0 
\end{pmatrix}\textrm{ and  }E_{12}=\begin{pmatrix}
   0   & 1   \\
   0   & 0 
\end{pmatrix}$$
where $E_{11}(e_1)=e_1, E_{11}(e_2)=0$, and $E_{12}(e_1)=e_2, E_{12}(e_2)=0$. Thus
\begin{eqnarray*}
E_{11}(e_1^2) & = & E_{11}(e_1)\cdot e_1+e_1\cdot E_{11}(e_1)= 2e_1^2\\
E_{11}(e_1e_2) & = & E_{11}(e_1)\cdot e_2+e_1\cdot E_{11}(e_2)=e_1e_2\\
E_{11}(e_2^2) & = & E_{11}(e_2)\cdot e_2+e_2\cdot E_{11}(e_2)=0.
\end{eqnarray*}
This means that with respect to $\{e_1^2,e_1e_2,e_2^2\}$, $[E_{11}]$ is the diagonal matrix $\dia\{2,1,0\}$. Similarly, 
\begin{eqnarray*}
E_{12}(e_1^2) & = & E_{12}(e_1)\cdot e_1+e_1\cdot E_{12}(e_1)= 2e_1e_2\\
E_{12}(e_1e_2) & = & E_{12}(e_1)\cdot e_2+e_1\cdot E_{12}(e_2)=e_2^2\\
E_{12}(e_2^2) & = & E_{12}(e_2)\cdot e_2+e_2\cdot E_{12}(e_2)=0.
\end{eqnarray*}
Hence, $[E_{12}]=\begin{pmatrix}
    0  & 2&0   \\
    0  & 0&1\\
    0&0&0 
\end{pmatrix}.$
Similarly, we have
$[E_{21}]=\begin{pmatrix}
    0  & 0&0   \\
    1  & 0&0\\
    0&2&0 
\end{pmatrix}$
and $[E_{22}]=\dia\{0,1,2\}$.
\hbo}\end{exam}

\subsection{Proof of Theorem \ref{thm1}} 
This subsection is devoted to the proof of Theorem \ref{thm1}. 
Before giving the proof, we first establish several necessary lemmas.

\begin{lem}\label{lem3.2}
Let $x_0,x_1,\dots,x_n$ be $n+1$ variables. Then 
\begin{equation}
\label{ }
\det\left(x_0\cdot I+\sum_{i=1}^nx_i\cdot [E_{ii}]\right)=\prod^{d_i\in\N}_{d_1+\dots+d_n=d}\left(x_0+\sum_{i=1}^nd_i\cdot x_i\right).
\end{equation}
\end{lem}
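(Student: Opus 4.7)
The plan is to show that every matrix $[E_{ii}]$ is simultaneously diagonal in the ordered basis $\B$ from \eqref{eq3.4}, which reduces the determinant to a product over the diagonal entries.

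First I would apply formula \eqref{eq3.7} with $i=j$ to a basis monomial. Writing a monomial of $\B$ as $e_1^{d_1}e_2^{d_2}\cdots e_n^{d_n}$ with $d_1+\cdots+d_n=d$, we have
$$E_{ii}\cdot (e_1^{d_1}\cdots e_n^{d_n})=\sum_{s=1}^d\updelta_{i_s,i}\cdot e_{i_1}\cdots e_{i_{s-1}}\cdot e_i\cdot e_{i_{s+1}}\cdots e_{i_d}=d_i\cdot e_1^{d_1}\cdots e_n^{d_n},$$
because each factor equal to $e_i$ contributes $1$ to the Kronecker sum while leaving the monomial unchanged. Hence each $[E_{ii}]$ is diagonal in the basis $\B$, with the diagonal entry on the line indexed by $e_1^{d_1}\cdots e_n^{d_n}$ equal to $d_i$.

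Second, since all $n$ matrices $[E_{11}],\dots,[E_{nn}]$ are diagonal in the same ordered basis $\B$, the matrix
$$x_0\cdot I+\sum_{i=1}^n x_i\cdot[E_{ii}]$$
is itself diagonal, with the entry on the line labelled by the tuple $(d_1,\dots,d_n)$ (where $d_i\in\N$ and $d_1+\cdots+d_n=d$) equal to $x_0+\sum_{i=1}^n d_i\cdot x_i$. Taking the determinant of a diagonal matrix as the product of its diagonal entries yields
$$\det\left(x_0\cdot I+\sum_{i=1}^n x_i\cdot[E_{ii}]\right)=\prod^{d_i\in\N}_{d_1+\dots+d_n=d}\left(x_0+\sum_{i=1}^n d_i\cdot x_i\right),$$
as required.

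There is essentially no obstacle here: the content of the lemma is the observation that the $[E_{ii}]$ form a commuting family of diagonalizable operators that are simultaneously diagonalized by the monomial basis $\B$, and that the corresponding weights are exactly the exponent vectors. The only small care needed is in unpacking \eqref{eq3.7} for the case $i=j$ to see that the Kronecker sum collapses to the multiplicity $d_i$ of the factor $e_i$ in the monomial; after that the determinant computation is immediate.
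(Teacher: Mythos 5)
Your proof is correct and follows essentially the same route as the paper: both arguments use \eqref{eq3.7} to show that each $[E_{ii}]$ is diagonal on the monomial basis $\B$ with eigenvalue equal to the multiplicity $d_i$ of $e_i$, so that $x_0\cdot I+\sum_i x_i\cdot[E_{ii}]$ is diagonal with entries $x_0+\sum_i d_i x_i$ and the determinant is the product of these. Your indexing of the diagonal directly by the exponent tuples $(d_1,\dots,d_n)$ makes the final product slightly more immediate than the paper's bijection argument, but the content is identical.
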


\begin{proof}
Let us consider the ordered basis $\B$ described as (\ref{eq3.4}) and $i\in\{1,2,\dots,n\}$. Since $E_{ii}(e_s)=\updelta_{si}\cdot e_i$, it follows from (\ref{eq3.7}) that
$$E_{ii}\cdot (e_{i_1}e_{i_2}\cdots e_{i_d})=\sum_{s=1}^d \updelta_{i_s,i}\cdot  e_{i_1}\cdots e_{i_s-1}\cdot e_i\cdot e_{i_s+1}\cdots e_{i_d}.$$
This implies that $[E_{ii}]$ is a diagonal matrix of size $\binom{n+d-1}{d}$ over $\Z$ and the diagonal elements of 
$[E_{ii}]$ count occurrences of $e_i$ in the basis elements of $\B$. Thus, the matrix 
$$\sum_{i=1}^nx_i\cdot [E_{ii}]$$
is also diagonal of the same size but over the polynomial ring $\Z[x_1,\dots,x_n]$. In particular, 
each diagonal entry of $\sum_{i=1}^nx_i\cdot [E_{ii}]$ has the following form
$$d_1x_1+d_2x_2+\dots+d_nx_n$$
where all $d_i\in\N$ and $d_1+d_2+\dots+d_n=d$. Since the diagonal elements of $[E_{ii}]$ count occurrences of $e_i$, we see that any two entries in $\sum_{i=1}^nx_i\cdot [E_{ii}]$ are different. Hence, this fact, together with the fact that 
$$|\B|=\binom{n+d-1}{d}=\dim(S^d(\C^n))=\textrm{ the size of the matrix }\sum_{i=1}^nx_i\cdot [E_{ii}]$$
implies that there exists a one-to-one correspondence between the set $\B$ expressed in (\ref{eq3.1}) and the set of all diagonal entries of the matrix $\sum_{i=1}^nx_i\cdot [E_{ii}]$.
Therefore,
$$\det\left(x_0\cdot I+\sum_{i=1}^nx_i\cdot [E_{ii}]\right)=\prod^{d_i\in\N}_{d_1+\dots+d_n=d}\left(x_0+\sum_{i=1}^nd_i\cdot x_i\right).$$
This completes the proof.
\end{proof}

\begin{lem} \label{lem3.3}
For any $d\geqslant 2$ and $1\leqslant i<j\leqslant n$, each $[E_{ij}]$ is a strictly upper triangular matrix. 
\end{lem}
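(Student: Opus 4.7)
The plan is to evaluate $E_{ij}(v)$ explicitly for each basis monomial $v\in \B$ via formula (\ref{eq3.7}), to show that the result is a scalar multiple of a lexicographically strictly smaller element of $\B$, and then to combine this with the matrix convention illustrated in Example \ref{exam3.1} (in which the $(k,\ell)$-entry of $[E_{ij}]$ records the coefficient of $v_\ell$ in the expansion of $E_{ij}(v_k)$) in order to conclude that every nonzero entry of $[E_{ij}]$ lies strictly above the diagonal.

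First I would fix a typical basis element $v=e_1^{d_1}e_2^{d_2}\cdots e_n^{d_n}\in\B$ with $d_1+\cdots+d_n=d$ and re-express it as $v=e_{i_1}e_{i_2}\cdots e_{i_d}$ with $1\leqslant i_1\leqslant \cdots\leqslant i_d\leqslant n$. Applying (\ref{eq3.7}), only the indices $s$ with $i_s=i$ contribute nonzero summands, and each such summand, after using commutativity in $S^d(\C^n)$, equals the monomial obtained from $v$ by removing one factor $e_i$ and inserting one factor $e_j$. Since there are exactly $d_i$ such indices $s$, collecting equal terms gives
\[
E_{ij}(v) \;=\; d_i \cdot e_1^{d_1} \cdots e_i^{d_i-1} \cdots e_j^{d_j+1} \cdots e_n^{d_n},
\]
with the convention that the right-hand side is $0$ when $d_i=0$.

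Next I would compare $v$ with the image monomial $v':=e_1^{d_1}\cdots e_i^{d_i-1}\cdots e_j^{d_j+1}\cdots e_n^{d_n}$ in the lexicographic order with $e_1>e_2>\cdots>e_n$. Because $i<j$, the exponent tuples of $v$ and $v'$ agree in positions $1,2,\dots,i-1$ and first differ at position $i$, where $v$ has exponent $d_i\geqslant 1$ while $v'$ has exponent $d_i-1$; thus $v>_{\mathrm{lex}} v'$, so $v'$ appears strictly later than $v$ in the ordered basis (\ref{eq3.4}). Consequently, in the row of $[E_{ij}]$ indexed by $v$ the only possibly nonzero entry lies strictly to the right of the diagonal. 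Since this conclusion is uniform in $v\in\B$, the matrix $[E_{ij}]$ is strictly upper triangular. The only subtle point is bookkeeping with the matrix convention: one must verify that the row/column layout established by Example \ref{exam3.1}, together with the ordering of $\B$ from lex-largest to lex-smallest in (\ref{eq3.4}), really sends ``image is lex-strictly-smaller'' to ``nonzero entries strictly above the diagonal''; once that alignment is confirmed, the proof is a direct application of (\ref{eq3.7}).
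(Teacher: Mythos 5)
Your proposal is correct and takes essentially the same approach as the paper: the paper's own proof is just a terser statement that, by (\ref{eq3.7}), $E_{ij}$ with $i<j$ ``shifts up'' the indices of basis monomials so that each image is a later element of the ordered basis (\ref{eq3.4}), which you make explicit via the formula $E_{ij}(v)=d_i\cdot e_1^{d_1}\cdots e_i^{d_i-1}\cdots e_j^{d_j+1}\cdots e_n^{d_n}$ and the check of the row convention from Example \ref{exam3.1}. No gaps.
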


\begin{proof}
Note that $j>i$, thus it follows from (\ref{eq3.7}) that the action of $E_{ij}$ shifts up the indices of the basis monomials in the basis $\B$. This fact, together with the ordering of $\B$ described as in (\ref{eq3.4}), implies that the resulting matrix of
$E_{ij}$ is strictly upper triangular. 
\end{proof}

The following lemma is well-known in invariant theory of finite groups; see for example, \cite[Section 1.2, page 7]{CW11}.

\begin{lem} \label{lem3.4}
Let $k[W]^G$ be the invariant ring of a finite group $G$ with respect to a $G$-representation $W$.  For $f\in k[W]$, write $G(f)$ for the $G$-orbit $\{\upsigma(f)\mid \upsigma\in G\}$ of $f$ and define $m:=|G(f)|$. Then 
\begin{equation}
\label{eq3.9}
\prod_{h\in G(f)} (\uplambda-h)=\sum_{i=0}^m(-1)^i\cdot c_i\cdot \uplambda^{m-i}
\end{equation} 
where $\uplambda$ is an indeterminant and all coefficients $c_i\in k[W]^G$.
\end{lem}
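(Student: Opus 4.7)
The plan is to read off the coefficients $c_i$ using Vieta's formulas and then exploit the defining property of the orbit: $G$ permutes $G(f)$, so any symmetric function of the orbit elements is automatically $G$-invariant. Concretely, I would extend the $G$-action from $k[W]$ to the polynomial ring $k[W][\lambda]$ by declaring $\lambda$ to be fixed, and then show that the polynomial $\prod_{h \in G(f)} (\lambda - h)$ is itself $G$-invariant; the result then follows by comparing coefficients of powers of $\lambda$.

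In more detail, I would first enumerate $G(f) = \{h_1, h_2, \ldots, h_m\}$ and expand
$$\prod_{j=1}^m (\lambda - h_j) = \sum_{i=0}^m (-1)^i\, \upepsilon_i(h_1, \ldots, h_m)\, \lambda^{m-i},$$
where $\upepsilon_i$ denotes the $i$-th elementary symmetric polynomial. Matching against (\ref{eq3.9}), this identifies $c_i = \upepsilon_i(h_1, \ldots, h_m) \in k[W]$. The remaining task is to prove $c_i \in k[W]^G$.

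For this, fix any $\upsigma \in G$. Because $\upsigma$ acts on $k[W]$ by $k$-algebra automorphisms, and acts trivially on $\lambda$, I get
$$\upsigma\!\left(\prod_{h \in G(f)} (\lambda - h)\right) = \prod_{h \in G(f)} (\lambda - \upsigma(h)).$$
By the very definition of an orbit, $h \mapsto \upsigma(h)$ is a bijection of $G(f)$ onto itself, so the right-hand side is merely a reordering of the factors on the left, hence equals $\prod_{h \in G(f)}(\lambda - h)$. Therefore the whole polynomial in $\lambda$ is $\upsigma$-invariant, and comparing coefficients of $\lambda^{m-i}$ forces $\upsigma(c_i) = c_i$ for each $i$. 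Since $\upsigma \in G$ was arbitrary, each $c_i$ lies in $k[W]^G$.

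There is no genuine obstacle here: the argument is a direct consequence of two elementary facts, namely that $G$ stabilizes its own orbits and that elementary symmetric polynomials are invariant under permutations of their arguments. I would note that finiteness of $G$ is only used implicitly to guarantee that the orbit $G(f)$ is finite so the product is a well-defined polynomial of degree $m$.
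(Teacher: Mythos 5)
Your proof is correct and is the standard orbit--Chern-class argument: the paper itself gives no proof of Lemma \ref{lem3.4}, merely citing \cite[Section 1.2, page 7]{CW11}, and your argument (extend the $G$-action to $k[W][\uplambda]$ fixing $\uplambda$, observe that $\upsigma$ permutes the factors of the product over the orbit, and compare coefficients) is exactly the proof that reference supplies. Nothing is missing; your closing remark that finiteness of $G$ is only needed to make the orbit finite is also accurate.
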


Suppose $i\in\N^+$. Let us recall the elementary symmetric polynomials $\upepsilon_i$ and power sums $p_i$ in the variables $x_1,x_2,\dots,x_n$, which will play a key role in the proofs of the main theorems: 
\begin{eqnarray*}
\upepsilon_i(x_1,x_2,\dots,x_n) & := & \sum_{1\leqslant r_1<r_2<\cdots<r_j\leqslant n} x_{r_1}x_{r_2}\cdots x_{r_j} \\
p_i(x_1,x_2,\dots,x_n) &:=&x_1^i+x_2^i+\cdots+x_n^i.
\end{eqnarray*}
These polynomials $\upepsilon_i$ and $p_i$ (for all $i\in\N^+$) are symmetric polynomials; the structure theorem of the algebra of symmetric polynomials asserts that they can generate the symmetric invariant ring $\C[x_1,x_2,\dots,x_n]^{S_n}$; see for example, \cite[Section 1.1]{Stu08}. 

 \begin{lem}\label{lem3.5}
Let $k$ be a field of characteristic zero and $S_n$ be the symmetric group of degree $n$ acting the polynomial ring $k[x_1,x_2,\dots,x_n]$ by permuting the variables in the standard way. Then $k[x_1,x_2,\dots,x_n]^{S_n}$ is a polynomial algebra, generated by the elementary symmetric polynomials $\upepsilon_1,\upepsilon_2,\dots,\upepsilon_n$ or the power sums $p_1,p_2,\dots,p_n$, i.e.,
$$k[x_1,x_2,\dots,x_n]^{S_n}=k[\upepsilon_1,\upepsilon_2,\dots,\upepsilon_n]=k[p_1,p_2,\dots,p_n].$$
\end{lem}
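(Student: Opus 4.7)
The plan is to prove the classical fundamental theorem of symmetric polynomials in two stages: first establish that the elementary symmetric polynomials generate $k[x_1,\dots,x_n]^{S_n}$ as a polynomial algebra (valid in any characteristic), and then transfer this result to the power sums using Newton's identities (where characteristic zero is needed).

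For the first stage, I would introduce the lexicographic monomial order on $k[x_1,\dots,x_n]$ with $x_1>x_2>\cdots>x_n$ and prove the generation by induction on the leading monomial. Given a nonzero symmetric polynomial $f$, let $c\cdot x_1^{a_1}x_2^{a_2}\cdots x_n^{a_n}$ be its leading term. Because $f$ is $S_n$-invariant, if some $a_i<a_{i+1}$ then the transposition $(i,i+1)$ would produce a lexicographically larger monomial in $f$, a contradiction; hence $a_1\geqslant a_2\geqslant\cdots\geqslant a_n$. The polynomial
\[
c\cdot \upepsilon_1^{a_1-a_2}\upepsilon_2^{a_2-a_3}\cdots \upepsilon_{n-1}^{a_{n-1}-a_n}\upepsilon_n^{a_n}
\]
is symmetric with the same leading term, so subtracting it from $f$ strictly decreases the leading monomial. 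Iterating terminates, which shows $f\in k[\upepsilon_1,\dots,\upepsilon_n]$. To prove algebraic independence of $\upepsilon_1,\dots,\upepsilon_n$, I would suppose some nonzero polynomial relation $P(\upepsilon_1,\dots,\upepsilon_n)=0$, pick the term $c\cdot \upepsilon_1^{b_1}\cdots \upepsilon_n^{b_n}$ of $P$ for which $(b_1+b_2+\cdots+b_n,\,b_2+\cdots+b_n,\,\dots,\,b_n)$ is lexicographically maximal, and observe that this term contributes a unique leading monomial $c\cdot x_1^{b_1+\cdots+b_n}x_2^{b_2+\cdots+b_n}\cdots x_n^{b_n}$ that cannot be cancelled, contradicting $P=0$.

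For the second stage, I would invoke the Newton identities: for $1\leqslant i\leqslant n$,
\[
i\cdot \upepsilon_i=\sum_{j=1}^{i}(-1)^{j-1}\,\upepsilon_{i-j}\,p_j,
\]
with $\upepsilon_0=1$. Since $\mathrm{char}(k)=0$, each $i$ is invertible, so one may solve recursively to obtain $\upepsilon_i$ as a polynomial expression in $p_1,\dots,p_i$, and conversely write each $p_i$ as a polynomial expression in $\upepsilon_1,\dots,\upepsilon_i$. Hence $k[\upepsilon_1,\dots,\upepsilon_n]=k[p_1,\dots,p_n]$, and in particular $p_1,\dots,p_n$ are algebraically independent because they generate a polynomial ring of transcendence degree $n$ over $k$.

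The only mildly delicate point I would emphasize is the characteristic-zero hypothesis at the Newton-identity step: the passage from $\upepsilon_i$ to $p_i$ requires dividing by $i$, so the power-sum statement genuinely needs $\mathrm{char}(k)=0$ (or at least $\mathrm{char}(k)>n$), whereas the elementary-symmetric statement holds over any field. Given that the result is entirely standard, I would in practice keep the write-up brief and cite a textbook reference such as \cite[Section 1.1]{Stu08} or the classical proof in \cite{CW11}, presenting the leading-monomial argument as a sketch rather than in full detail.
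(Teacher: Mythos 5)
Your proof is correct: the leading-monomial induction for generation and algebraic independence of the $\upepsilon_i$, followed by Newton's identities (using $\mathrm{char}(k)=0$ to divide by $i$) to pass between $\upepsilon_1,\dots,\upepsilon_n$ and $p_1,\dots,p_n$, is exactly the standard argument. The paper gives no proof of this lemma at all, simply citing \cite[Section 1.1]{Stu08} and \cite[Section 1.2]{Mac95} for the same classical facts, so your write-up matches the intended (referenced) proof.
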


\begin{rem}\label{rem3.6}{\rm
Note that when $n>1$, generally, we have $\upepsilon_i\neq p_i$ unless $i=1$. A method of expressing each
$p_i$ in terms of $\upepsilon_1,\upepsilon_2,\dots,\upepsilon_n$ (or vice versa) can be found in \cite[Section 1.2]{Mac95}.
\hbo}\end{rem}

We are ready to prove our first main result. 

\begin{proof}[Proof of Theorem \ref{thm1}]
We first consider the case of $d=1$, i.e., the case of standard representation $\C^n$. In this case, recall that the dimension of $\ut_n(\C)$ is $\frac{n(n+1)}{2}$ and  the standard basis of $\ut_n(\C)$ could be chosen as $\{E_{ij}\mid 1\leqslant i\leqslant j\leqslant n\}$. Thus  the characteristic polynomial is
$$\upvarphi_{\ut_n(\C)}(\C^n)=\det\left(x_0\cdot I+\sum_{i=1}^n x_{i}\cdot E_{ii}+\sum_{1\leqslant i<j\leqslant n} x_{ij}\cdot E_{ij}\right)$$
for some variables $x_0,x_i,$ and $x_{ij}$. Namely,
$$\upvarphi_{\ut_n(\C)}(\C^n)=\det\left(x_0\cdot I+\begin{pmatrix}
    x_{1}  &x_{12}&\cdots&x_{1n}    \\
    0  &x_{2}&\ddots&\vdots\\  
    \vdots&\ddots&\ddots&x_{n-1,n}\\
    0&\cdots&0&x_{n}
\end{pmatrix}\right)=\prod_{i=1}^n(x_0+x_{i}).$$
Therefore, the coefficient algebra $B_{\ut_n(\C)}(\C^n)$ is the polynomial algebra generated by
$n$ elementary polynomials in $x_{1},x_{2},\dots,x_{n}$, i.e., 
$$B_{\ut_n(\C)}(\C^n)=\C[\upepsilon_1,\upepsilon_2,\dots,\upepsilon_n]=\C[x_1,x_2,\dots,x_n]^{S_n}.$$

Now suppose that $d\geqslant 2$ and let us fix the ordered basis $\B$ in (\ref{eq3.4}) for 
the symmetric power $S^d(\C^n)$. Then the characteristic polynomial of 
$\ut_n(\C)$ on $S^d(\C^n)$ is
$$\upvarphi_{\ut_n(\C)}(S^d(\C^n))=\det\left(x_0\cdot I+\sum_{i=1}^n x_{i}\cdot [E_{ii}]+\sum_{1\leqslant i<j\leqslant n} x_{ij}\cdot [E_{ij}]\right).$$
It follows from Lemmas \ref{lem3.3} and \ref{lem3.2} that
\begin{equation}
\label{ }
\upvarphi_{\ut_n(\C)}(S^d(\C^n))=\det\left(x_0\cdot I+\sum_{i=1}^n x_{i}\cdot [E_{ii}]\right)=\prod^{d_i\in\N}_{d_1+\dots+d_n=d}\left(x_0+\sum_{i=1}^nd_i\cdot x_i\right).
\end{equation}

The standard permutation action of $S_n$ on $\{1,2,\dots,n\}$ induces an action of $S_n$ on the basis $\B$ in (\ref{eq3.1}) by permuting indices of the monomial basis elements. Suppose that 
\begin{equation}
\label{decomp}
\B=\B_1\cup\B_2\cup\cdots\cup\B_t
\end{equation}
denotes the disjoint union of all orbits. Note that each element in $\B$ corresponds to a linear form:
$$x_0+\sum_{i=1}^nd_i\cdot x_i.$$
For $j\in\{1,2,\dots,t\}$, we write $\upvarphi_j$ for the the product of all linear forms corresponding to the elements of $\B_j$. For example, we may assume that $\B_1=\{e_1^d,e_2^d,\dots,e_n^d\}$ and
$$\upvarphi_1=\prod_{i=1}^n(x_0+d\cdot x_i)=x_0^{n}+d\upepsilon_1\cdot x_0^{n-1}+\cdots+d^{n-1}\upepsilon_{n-1}\cdot x_0+d^n\upepsilon_n.$$
Moreover, by Lemma \ref{lem3.4}, we see that all coefficients obtained by expanding $\upvarphi_j$ along $x_0$
are symmetric polynomials in $x_1,x_2,\dots,x_n$. Since
$$\upvarphi_{\ut_n(\C)}(S^d(\C^n))=\prod_{j=1}^n\upvarphi_j,$$ 
it follows that the coefficient algebra $B_{\ut_n(\C)}(S^d(\C^n))$ (recall that it is generated by all coefficients of $\upvarphi_{\ut_n(\C)}(S^d(\C^n))$)  is contained in $\C[x_1,x_2,\dots,x_n]^{S_n}=\C[\upepsilon_1,\upepsilon_2,\dots,\upepsilon_n]$. 

To complete the proof, we need to prove that the inverse containment, i.e., 
$\C[\upepsilon_1,\upepsilon_2,\dots,\upepsilon_n]\subseteq B_{\ut_n(\C)}(S^d(\C^n))$. By Lemma \ref{lem3.5}, it suffices to show
$$\C[p_1,p_2,\dots,p_n]\subseteq B_{\ut_n(\C)}(S^d(\C^n)).$$
In fact, this statement can be proved by exhibiting the first $n+1$ coefficients $c_0,c_1,c_2,\dots,c_n$ of $\upvarphi_{\ut_n(\C)}(S^d(\C^n))$. First of all, it is clear that 
$c_0=1$ because $\upvarphi_{\ut_n(\C)}(S^d(\C^n))$ is a monic polynomial. Secondly, for $c_1$, there exists a positive number $a_1\in\N^+$ such that $c_1=a_1\upepsilon_1=a_1p_1$. Precisely, for each $j\in\{1,2,\dots,t\}$, since all entries of a partition of $d$ are nonnegative, the second coefficient of $\upvarphi_j$ is a positive scalar multiple of $\upepsilon_1$. Note that $c_1$ is the sum of the second coefficients of all $\upvarphi_j$, thus it is also a positive scalar multiple of $p_1$.
Moreover, since the characteristic of $\C$ is zero, it follows that
$$p_1\in B_{\ut_n(\C)}(S^d(\C^n)).$$
Let $m:=\binom{n+d-1}{d}$. Now we may write
\begin{equation}
\label{eq3.12}
\upvarphi_{\ut_n(\C)}(S^d(\C^n))=x_0^{m}+a_1p_1\cdot x_0^{m-1}+c_2\cdot x_0^{m-2}+c_3\cdot x_0^{m-3}+\cdots.
\end{equation}
For each $j\in\{2,\dots,n\}$, note that $c_j$ is a homogeneous symmetric polynomial of degree $j$, thus it can be expressed as a polynomial in $p_1,\dots,p_{j}$. Choose the graded lexicographic monomial ordering with $x_1>x_2>\cdots>x_n$ on $\C[x_1,\dots,x_n]$. Since $c_j$ is the $j$th elementary symmetric polynomial in all linear forms corresponding to all partitions of $d$, we see that the leading monomials of $c_j$ and $p_j$ both are $x_1^j$. 
Hence, there are a polynomial $Q$ in $j-1$ variables and a nonzero scalar $a_j$ such that
\begin{equation}
\label{ }
c_j=a_jp_j+Q(p_1,\dots,p_{j-1}),
\end{equation}
which, together with the fact that $p_1\in B_{\ut_n(\C)}(S^d(\C^n))$ and $a_2\neq 0$, implies that 
$$p_2\in B_{\ut_n(\C)}(S^d(\C^n)).$$
Proceeding in this way, we see that
$p_3,\dots,p_{n}\in B_{\ut_n(\C)}(S^d(\C^n))$ as well. Therefore, the coefficient algebra
$B_{\ut_n(\C)}(S^d(\C^n))=\C[x_1,x_2,\dots,x_n]^{S_n}$.
\end{proof}

\subsection{Examples}
In this subsection, we present some low-dimensional examples to illustrate how to explicitly calculate the coefficient algebra $B_{\ut_n(\C)}(S^d(\C^n))$.

\begin{exam}[$n=2$]{\rm
(1) Let $d=1$.  The characteristic polynomial $\upvarphi_{\ut_2(\C)}(\C^2)=(x_0+x_{1})(x_0+x_{2})$ and so
the coefficient algebra of $\ut_2(\C)$ on $\C^2$ is $\C[x_{1}+x_{2},x_{1}x_{2}]=\C[x_1,x_2]^{S_2}$.

(2) Consider $d=2$. By Example \ref{exam3.1}, we see that the characteristic polynomial is
\begin{eqnarray*}
\upvarphi_{\ut_2(\C)}(S^2(\C^2))&=&\det\left(x_0\cdot I+x_1\cdot [E_{11}]+x_2\cdot [E_{22}]+x_{12}\cdot [E_{12}]\right)\\
&=&\det\begin{pmatrix}
   x_0+2x_1   & 2x_{12}& 0  \\
    0  &  x_0+x_1+x_2&x_{12}\\
     0 &0&x_0+2x_2
\end{pmatrix}\\
&=&(x_0+2x_1)(x_0+2x_2)(x_0+x_1+x_2).
\end{eqnarray*}
For this case, there are two orbits in the orbit decomposition (\ref{decomp}). 
Note that the orbit of $2x_1$ under the permutation action of $S_2$ is $\{2x_1,2x_2\}$, thus
$$\upvarphi_1=(x_0+2x_1)(x_0+2x_2)=x_0^2+2\upepsilon_1 \cdot x_0+4\upepsilon_2.$$
Similarly, as $x_1+x_2=\upepsilon_1$ is invariant of $S_2$, its orbit only consists of itself and thus
$$\upvarphi_2=x_0+\upepsilon_1.$$
The characteristic polynomial is
\begin{eqnarray*}
\upvarphi_{\ut_2(\C)}(S^2(\C^2))&=&\upvarphi_1\cdot \upvarphi_2\\
&=&(x_0^2+2\upepsilon_1 \cdot x_0+4\upepsilon_2)(x_0+\upepsilon_1)\\
&=&x_0^3+2\upepsilon_1 \cdot x_0^2+4\upepsilon_2\cdot x_0+\upepsilon_1\cdot x_0^2+2\upepsilon_1^2 \cdot x_0+4\upepsilon_1\upepsilon_2\\
&=&x_0^3+3\upepsilon_1 \cdot x_0^2+(4\upepsilon_2+2\upepsilon_1^2)\cdot x_0+4\upepsilon_1\upepsilon_2. 
\end{eqnarray*}
Hence, $B_{\ut_2(\C)}(S^2(\C^2))=\C[f_1,f_2,f_3]$, where
$$f_1:=\upepsilon_1, f_2:=2\upepsilon_2+\upepsilon_1^2, f_3:=\upepsilon_1\upepsilon_2.$$
Clearly, $\C[f_1,f_2,f_3]\subseteq\C[\upepsilon_1,\upepsilon_2]$.  
Conversely, as $\upepsilon_1=f_1\in \C[f_1,f_2,f_3]$ and $$\upepsilon_2=\frac{1}{2}(f_2-f_1^2)\in \C[f_1,f_2,f_3],$$ thus
$\C[\upepsilon_1,\upepsilon_2]\subseteq \C[f_1,f_2,f_3]$, implying that $B_{\ut_2(\C)}(S^2(\C^2))=\C[x_1,x_2]^{S_2}$.

(3) Consider $d=3$.  In this case, note that $[E_{11}]=\dia\{3,2,1,0\}$ and $[E_{22}]=\dia\{0,1,2,3\}$. Thus
the characteristic polynomial is
\begin{eqnarray*}
\upvarphi_{\ut_2(\C)}(S^3(\C^2))&=&(x_0+3x_1)(x_0+2x_1+x_2)(x_0+x_1+2x_2)(x_0+3x_2)\\
&=&(x_0+3x_1)(x_0+3x_2)\cdot(x_0+2x_1+x_2)(x_0+x_1+2x_2)\\
&=&(x_0^2+3\upepsilon_1 \cdot x_0+9\upepsilon_2)\cdot (x_0^2+3\upepsilon_1 \cdot x_0+(\upepsilon_2+2\upepsilon_1^2)),
\end{eqnarray*}
for which the second and third coefficients are: $6\upepsilon_1$ and $10\upepsilon_2+11\upepsilon_1^2$ respectively.
Hence, a similar argument for the case $d=2$ shows that $B_{\ut_2(\C)}(S^3(\C^2))=\C[x_1,x_2]^{S_2}$ as well.
\hbo}\end{exam}

\begin{exam}[$n=3$]{\rm
(1) For the case of $d=1$, the coefficient algebra $B_{\ut_3(\C)}(\C^3)$  is $\C[\upepsilon_1, \upepsilon_2,\upepsilon_3]$ because the characteristic polynomial $\upvarphi_{\ut_3(\C)}(\C^3)=(x_0+x_{1})(x_0+x_{2})(x_0+x_{3}).$ 

(2) Consider the case of $d=2$. The ordered basis $\B=\{e_1^2,e_1e_2,e_1e_3,e_2^2,e_2e_3,e_3^2\}$ and
the characteristic polynomial is
\begin{eqnarray*}
\upvarphi_{\ut_3(\C)}(S^2(\C^3))& = & (x_0+2x_1)(x_0+x_1+x_2)(x_0+x_1+x_3)(x_0+2x_2)(x_0+x_2+x_3)(x_0+2x_3) \\
 & = & (x_0+2x_1)(x_0+2x_2)(x_0+2x_3) \cdot (x_0+x_1+x_2)(x_0+x_1+x_3)(x_0+x_2+x_3)\\
 &=&(x_0^3+2\upepsilon_1 \cdot x_0^2+4\upepsilon_2\cdot x_0+8\upepsilon_3)\cdot (x_0^3+2\upepsilon_1 \cdot x_0^2+(\upepsilon_2+\upepsilon_1^2)\cdot x_0+(\upepsilon_1\upepsilon_2-\upepsilon_3)).
\end{eqnarray*}
Thus, the coefficient algebra $B_{\ut_3(\C)}(S^2(\C^3))\subseteq \C[\upepsilon_1, \upepsilon_2,\upepsilon_3]$. We observe that the first four coefficients $\upvarphi_{\ut_3(\C)}(S^2(\C^3))$ are
$$c_0=1,c_1=4\upepsilon_1,c_2=5\upepsilon_2+5\upepsilon_1^2,c_3=7\upepsilon_3+11\upepsilon_1\upepsilon_2+2\upepsilon_1^3.$$
Hence, $\upepsilon_1=\frac{1}{4}\cdot c_1\in B_{\ut_3(\C)}(S^2(\C^3))$ and so $\upepsilon_2=\frac{1}{5}(c_2-5\upepsilon_1^2)\in B_{\ut_3(\C)}(S^2(\C^3))$.
These facts, together with $\upepsilon_3=\frac{1}{7}(c_3-11\upepsilon_1\upepsilon_2-2\upepsilon_1^3)$
show that $\upepsilon_3$ also belongs to $B_{\ut_3(\C)}(S^2(\C^3))$. 

Therefore, $B_{\ut_3(\C)}(S^2(\C^3))=\C[\upepsilon_1, \upepsilon_2,\upepsilon_3]=\C[x_1,x_2,x_3]^{S_3}$.
 \hbo}\end{exam}

 \section{Coefficient Algebras of $\gl_n(\C)$ and $\sll_n(\C)$}\label{sec4}
\setcounter{equation}{0}
\renewcommand{\theequation}
{4.\arabic{equation}}
\setcounter{theorem}{0}
\renewcommand{\thetheorem}
{4.\arabic{theorem}}
 
 \noindent This section aims to compute the coefficient algebras of  the general linear Lie algebra  $\gl_n(\C)$
 and the special linear Lie algebra $\sll_n(\C)$ on any symmetric power $S^d(\C^n)$ of the standard representation $\C^n$, for all $d\in\N^+$.

Consider the conjugation action of the general linear group $\GL_n(\C)$ on the underlying space of its Lie algebra $\gl_n(\C)$ defined by
\begin{equation}
\label{ }
(g,A)\mapsto g\cdot A\cdot g^{-1}
\end{equation}
for all $g\in \GL_n(\C)$ and $A\in \gl_n(\C)$.  
The following result is well-known in  classical invariant theory; see  \cite[Proposition 4.1.2]{Kra17}. 

\begin{lem}\label{lem4.1}
The invariant ring
\begin{equation}
\label{ }
\C[\gl_n(\C)]^{\GL_n(\C)}=\C[x_{ij}\mid 1\leqslant i,j\leqslant n]^{\GL_n(\C)}=\C[s_1,s_2,\dots,s_n]=\C[\Tr_1, \Tr_2, \dots, \Tr_n]
\end{equation}
is a polynomial algebra of Krull dimension $n$ over $\C$, where $s_i$ $(1\leqslant i\leqslant n)$  denotes the sum of all principal $i\times i$-minors of the generic matrix $X=(x_{ij})_{n\times n}$ and
$$\Tr_i:=\Tr(X^i)$$ 
denotes the trace function of degree $i$ for all $i\in\N^+$.
\end{lem}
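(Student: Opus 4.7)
The plan is to prove the three claims in turn: the $s_i$ (and the $\Tr_i$) are conjugation invariants, they are algebraically independent, and they generate the full invariant ring. The main tool will be the characteristic polynomial of the generic matrix together with a restriction-to-the-torus argument.

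First, I would set $X=(x_{ij})_{n\times n}$ and consider its characteristic polynomial over $\C[x_{ij}]$:
\begin{equation*}
\det(\uplambda\cdot I-X)=\uplambda^n-s_1\uplambda^{n-1}+s_2\uplambda^{n-2}-\cdots+(-1)^ns_n.
\end{equation*}
Expanding this determinant by principal minors identifies the $s_i$ with the sums of principal $i\times i$ minors of $X$. Because $\det(\uplambda\cdot I-gXg^{-1})=\det(\uplambda\cdot I-X)$ for every $g\in\GL_n(\C)$, each $s_i$ is $\GL_n(\C)$-invariant. Similarly, $\Tr((gXg^{-1})^i)=\Tr(X^i)$ shows each $\Tr_i$ is invariant. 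Newton's identities then express the $s_i$ as polynomials in $\Tr_1,\dots,\Tr_i$ and conversely (the characteristic of $\C$ being zero, so the required inversions are permissible), giving $\C[s_1,\dots,s_n]=\C[\Tr_1,\dots,\Tr_n]$.

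Next, I would establish algebraic independence and generation simultaneously by restricting to the subspace $\h\subseteq\gl_n(\C)$ of diagonal matrices. The restriction homomorphism $\uprho\colon\C[\gl_n(\C)]\to\C[\h]=\C[x_{11},\dots,x_{nn}]$ sends $s_i$ to the $i$th elementary symmetric polynomial $\upepsilon_i(x_{11},\dots,x_{nn})$, which are algebraically independent by Lemma \ref{lem3.5}; hence so are $s_1,\dots,s_n$. For generation, let $f\in\C[\gl_n(\C)]^{\GL_n(\C)}$. The key observation is that any permutation matrix $\upsigma\in S_n\subseteq\GL_n(\C)$ acts on $\h$ by permuting the diagonal entries, so $\uprho(f)\in\C[x_{11},\dots,x_{nn}]^{S_n}$. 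By Lemma \ref{lem3.5}, $\uprho(f)=Q(\upepsilon_1,\dots,\upepsilon_n)$ for some polynomial $Q$, and therefore $f-Q(s_1,\dots,s_n)$ is an invariant that vanishes on $\h$. To finish I would invoke Zariski density of the diagonalizable matrices in $\gl_n(\C)$: since every diagonalizable matrix is conjugate to some element of $\h$, an invariant that vanishes on $\h$ vanishes on a Zariski-dense subset, hence is zero. Thus $f=Q(s_1,\dots,s_n)$, and we conclude $\C[\gl_n(\C)]^{\GL_n(\C)}=\C[s_1,\dots,s_n]$, a polynomial algebra of Krull dimension $n$.

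The main obstacle is the generation step — specifically, the passage from the restricted identity $\uprho(f)=Q(\upepsilon_1,\dots,\upepsilon_n)$ back to a global identity on $\gl_n(\C)$. I would handle this through Zariski density: the set of matrices with $n$ distinct eigenvalues is a nonempty Zariski-open subset of $\gl_n(\C)$, each such matrix is conjugate to an element of $\h$, and a polynomial that vanishes on a Zariski-dense subset of the affine space $\gl_n(\C)$ is identically zero. Everything else — the invariance of the $s_i$, the independence via restriction, and the conversion between $s_i$ and $\Tr_i$ via Newton's identities — is formal.
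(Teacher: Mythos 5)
Your proposal is correct. Note that the paper does not actually prove Lemma \ref{lem4.1} at all: it simply records the statement as ``well-known in classical invariant theory'' and cites \cite[Proposition 4.1.2]{Kra17}. What you have written is a correct, self-contained version of the standard argument behind that citation (essentially the Chevalley restriction theorem for $\gl_n$): invariance of the $s_i$ via the characteristic polynomial, the equivalence $\C[s_1,\dots,s_n]=\C[\Tr_1,\dots,\Tr_n]$ via Newton's identities in characteristic zero, algebraic independence by restricting to the diagonal subspace $\h$, and generation by combining the symmetry of $\uprho(f)$ under the permutation matrices with the Zariski density of matrices having $n$ distinct eigenvalues. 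Each step is sound; in particular, the passage from ``$f-Q(s_1,\dots,s_n)$ vanishes on $\h$'' to ``it vanishes identically'' is correctly justified, since an invariant vanishing on $\h$ vanishes on all conjugates of $\h$, hence on the dense open locus of regular semisimple matrices. The only thing your write-up leaves implicit is the precise statement of Newton's identities, but that is a standard fact (cf.\ Remark \ref{rem3.6} and \cite[Section 1.2]{Mac95}) and does not constitute a gap. In short: where the paper defers to the literature, you have supplied the expected proof.
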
 
 
 Note that when $n>1$, $s_i=\Tr_i$ if and only if $i=1$, as the following example shows or see \cite[Section 2.4]{KP96} for more explicit calculations. 
 
 \begin{exam}\label{exam4.2}
{\rm (1) Consider $n=2$. The generic matrix
 $$X=\begin{pmatrix}
     x_{11} & x_{12}   \\
     x_{21} & x_{22}  
\end{pmatrix}.$$
Then $s_1=\Tr_1=\Tr(X)=x_{11}+x_{22}$ and $s_2=\det(X)=x_{11}x_{22}-x_{12}x_{21}$.  A direct calculation shows that
$\Tr_2=\Tr(X^2)=x_{11}^2+x_{22}^2+2x_{12}x_{21}$.

(2) If $n=3$, then $$X=\begin{pmatrix}
     x_{11} & x_{12} &x_{13}  \\
     x_{21} & x_{22} & x_{23} \\
     x_{31} & x_{32} & x_{33}
\end{pmatrix}$$
and $s_1=\Tr_1=\Tr(X)=x_{11}+x_{22}+x_{33}$. Furthermore, 
\begin{eqnarray*}
s_2 & = & \sum (\textrm{all }2\times 2\textrm{ principal minors of }X)\\
&=&\det\begin{pmatrix}
     x_{11} & x_{12}   \\
     x_{21} & x_{22}  
\end{pmatrix}+\det\begin{pmatrix}
     x_{11} & x_{13}   \\
     x_{31} & x_{33}  
\end{pmatrix}+\det \begin{pmatrix}
     x_{22} & x_{23}   \\
     x_{32} & x_{33}  
\end{pmatrix}\\
&=& (x_{11}x_{22}-x_{12}x_{21})+(x_{11}x_{33}-x_{13}x_{31})+(x_{22}x_{33}-x_{23}x_{32})
\end{eqnarray*}
and $s_3=\det(X)$.
\hbo}\end{exam}

 \begin{proof}[Proof of Theorem \ref{thm2}] 
Given $n^2+1$ new variables $x_0,x_{ij}$ for $1\leqslant i,j\leqslant n$ and setting a generic matrix $X=(x_{ij})_{n\times n}$, we assume that the characteristic polynomial
of $\gl_n(\C)$ with respect to the standard basis  $\{E_{ij}\mid 1\leqslant i,j\leqslant n\}$ on $S^d(\C^n)$ is
\begin{eqnarray*}
\upvarphi_{\gl_n(\C)}(S^d(\C^n))&=&\det\left(x_0\cdot I+\sum_{1\leqslant i,j\leqslant n} x_{ij}\cdot [E_{ij}]\right)\\
&=&\det\left(x_0\cdot I+[X]\right)\\
&=&x_0^m+\sum_{i=1}^m z_i\cdot x_0^{m-i},
\end{eqnarray*}
where $m:=\binom{n+d-1}{d}$ and $z_1,\dots,z_m\in \C[x_{ij}\mid 1\leqslant i,j\leqslant n]$. We may extend the conjugacy action of $\GL_n(\C)$ on 
$\C[x_{ij}\mid 1\leqslant i,j\leqslant n]$ to an action of $\GL_n(\C)$ on $\C[x_0,x_{ij}\mid 1\leqslant i,j\leqslant n]$ by setting $g\cdot x_0=x_0$ for all $g\in \GL_n(\C)$. Then the characteristic polynomial $\upvarphi_{\gl_n(\C)}(S^d(\C^n))$ is invariant under this extended conjugacy action of $\GL_n(\C)$. This means that each coefficient $z_i$ is a conjugacy invariant of $\GL_n(\C)$. Hence,
$B_{\gl_n(\C)}(S^d(\C^n))=\C[z_1,z_2,\dots,z_m]$ is contained in 
$\C[x_{ij}\mid 1\leqslant i,j\leqslant n]^{\GL_n(\C)}$; note that the latter invariant ring is equal to $\C[s_1,s_2,\dots,s_n]=\C[\Tr_1, \Tr_2, \dots, \Tr_n]$ by Lemma \ref{lem4.1}.

To complete the proof, we need to show that $\C[x_{ij}\mid 1\leqslant i,j\leqslant n]^{\GL_n(\C)}\subseteq B_{\gl_n(\C)}(S^d(\C^n)).$ In other words, it suffices to show that $\Tr_i\in B_{\gl_n(\C)}(S^d(\C^n))$ for $i=1,2,\dots,n$.
Firstly, we note that $\Tr_1=x_{11}+\cdots+x_{nn}$. As $z_1$ is a nonzero $\GL_n(\C)$-invariant of degree $1$, thus 
$z_1=a_1\cdot \Tr_1$ for some nonzero $a_1\in\C$. Hence, $\Tr_1=a_1^{-1}\cdot z_1\in B_{\gl_n(\C)}(S^d(\C^n))$.

Secondly, we endow the graded lexicographic monomial ordering on $\C[x_{ij}\mid 1\leqslant i,j\leqslant n]$ with
$x_{11}>x_{22}>\cdots>x_{nn}>\cdots$. We \textit{claim} that the leading monomial of $z_i$ is $x_{11}^i$ for $i=1,2,\dots,n$. To see that, we consider the standard surjective $\C$-algebra homomorphism 
\begin{equation}
\label{eq4.3}
\uppi:\C[x_0,x_{ij}\mid 1\leqslant i,j\leqslant n]\ra \C[x_0,x_{ij}\mid 1\leqslant i\leqslant j\leqslant n]
\end{equation}
defined by $x_{ij}\mapsto 0$ for all $i>j$ and fixing other $x_{ij}$ and $x_0$. The image of the characteristic polynomial 
$\upvarphi_{\gl_n(\C)}(S^d(\C^n))$ under $\uppi$ is equal to the characteristic polynomial 
$\upvarphi_{\ut_n(\C)}(S^d(\C^n))$. In particular, the image of the coefficient $z_i$ in $\upvarphi_{\gl_n(\C)}(S^d(\C^n))$
is the coefficient $c_i$ of $\upvarphi_{\ut_n(\C)}(S^d(\C^n))$ in (\ref{eq3.12}). 
We have seen in the proof of Theorem \ref{thm1}  that  the leading monomial of $c_i$ is $x_{11}^i$. Thus, the leading monomial of $z_i$ must be $x_{11}^i$ and the claim follows.

Since the leading monomial of $\Tr_i$ is $x_{11}^i$, there are a nonzero $a_i\in\C$ and a polynomial $Q$ in $i-1$ variables such that
\begin{equation}
\label{eq4.4}
z_i=a_i\cdot\Tr_i+Q(\Tr_1,\dots,\Tr_{i-1})
\end{equation}
for all $i\geqslant 2$. In particular, $z_2-a_2\cdot\Tr_2\in\C[\Tr_1]$ and 
$$\Tr_2\in\C[\Tr_1,z_2]=\C[z_1,z_2]\subseteq B_{\gl_n(\C)}(S^d(\C^n)).$$
Proceeding in the same way and using (\ref{eq4.4}) repeatedly, we see that
$$\Tr_3,\dots,\Tr_n\in \C[z_1,z_2,\dots,z_n]\subseteq B_{\gl_n(\C)}(S^d(\C^n)).$$

Therefore, $B_{\gl_n(\C)}(S^d(\C^n))=\C[x_{ij}\mid 1\leqslant i,j\leqslant n]^{\GL_n(\C)}$ and the proof is complete.
\end{proof}

 \begin{rem}{\rm
Note that the image of $B_{\gl_n(\C)}(S^d(\C^n))$ under the homomorphism $\uppi$ in (\ref{eq4.3}) is exactly the coefficient algebra $B_{\ut_n(\C)}(S^d(\C^n))$. Theorem \ref{thm1}, together with the algebraic geometry method appeared in \cite[Example 2.1.3]{DK15} also can give a proof to Theorem \ref{thm2}.
\hbo}\end{rem}

\begin{exam}[$n=2$]{\rm
The generic matrix is
$$X=\sum_{1\leqslant i,j\leqslant 2}x_{ij}\cdot E_{ij}=\begin{pmatrix}
     x_{11} & x_{12}   \\
      x_{21}& x_{22} 
\end{pmatrix}$$
and consider the ordered basis $\B$ in (\ref{eq3.4}).

(1) Suppose that $d=2$. By Example \ref{exam3.1}, we see that the resulting matrix of  $X$ is
$$[X]=\begin{pmatrix}
   2x_{11}   & 2x_{12} &0  \\
    x_{21}  & x_{11}+x_{22} &x_{12} \\
    0& 2x_{21}&2x_{22} 
\end{pmatrix}.$$
Thus the characteristic polynomial is
\begin{eqnarray*}
\varphi_{\gl_2(\C)}(S^2(\C^2)) & = &  \det\begin{pmatrix}
   x_0+2x_{11}   & 2x_{12} &0  \\
    x_{21}  & x_0+x_{11}+x_{22} &x_{12} \\
    0& 2x_{21}&x_0+2x_{22} 
\end{pmatrix}\\
 & = & x_0^3+z_1\cdot x_0^2+z_2\cdot x_0+z_3 
\end{eqnarray*}
where $z_1 = 3x_{11} + 3x_{22}, z_2=2x_{11}^2 - 4x_{12}x_{21} + 8x_{1}x_{22} + 2x_{22}^2$, and 
$$z_3=- 4x_{11}x_{12}x_{21} + 4x_{11}^2x_{22} - 4x_{12}x_{21}x_{22} + 4x_{11}x_{22}^2.$$
Recall that the conjugation invariants $\Tr_1=\Tr(X)$ and $\Tr_2=\Tr(X^2)$ were given in Example \ref{exam4.2} (1).  A direct computation verifies the following relations among $\{\Tr_1,\Tr_2,z_1,z_2,z_3\}$:
\begin{eqnarray*}
z_1& = & 3\cdot\Tr_1  \\
z_2 & = & 4\cdot\Tr_1^2-2\cdot\Tr_2\\
z_3&=& 2\cdot \Tr_1^3 - 2\cdot \Tr_1\cdot\Tr_2.
\end{eqnarray*}
Hence, the coefficient algebra $B_{\gl_2(\C)}(S^2(\C^2)) =\C[\gl_2(\C)]^{\GL_2(\C)}.$

(2) Suppose that $d=3$. The  resulting matrix of  $X$ on $S^3(\C^2)$ is
$$[X]=\begin{pmatrix}
   3x_{11}   & 3x_{12} &0  &0\\
    x_{21}  & 2x_{11}+x_{22} &2x_{12} &0\\
    0& 2x_{21}&x_{22}+ 2x_{22}&x_{12}\\
    0&0&3x_{21}&3x_{22}
\end{pmatrix}$$
and so the characteristic polynomial in this case is
\begin{eqnarray*}
\varphi_{\gl_2(\C)}(S^3(\C^2))&=&\det\begin{pmatrix}
   x_0+3x_{11}   & 3x_{12} &0  &0\\
    x_{21}  & x_0+2x_{11}+x_{22} &2x_{12} &0\\
    0& 2x_{21}&x_0+x_{11}+ 2x_{22}&x_{12}\\
    0&0&3x_{21}&x_0+3x_{22}
\end{pmatrix}\\
&=&x_0^4+z_1\cdot x_0^3+z_2\cdot x_0^2+z_3\cdot x_0+c_z 
\end{eqnarray*}
where 
\begin{eqnarray*}
z_1 & = & 6(x_{11} + x_{22}) \\
z_2 & = & 11x_{11}^2 - 10x_{12}x_{21} + 32x_{11}x_{22} + 11x_{22}^2\\ 
z_3&=&6x_{11}^3 - 30x_{11}x_{12}x_{21} + 48x_{11}^2x_{22} - 30x_{12}x_{21}x_{22} +48x_{11}x_{22}^2 + 6x_{22}^3\\
z_4&=&-18x_{11}^2x_{12}x_{21} + 9x_{12}^2x_{21}^2 + 18x_{11}^3x_{22} - 54x_{11}x_{12}x_{21}x_{22} + 
45x_{11}^2x_{22}^2 \\
&&- 18x_{12}x_{21}x_{22}^2 + 18x_{11}x_{22}^3.
\end{eqnarray*}
The following relations 
\begin{eqnarray*}
z_1 & = & 6\cdot\Tr_1\\
z_2 & = & 16\cdot(\Tr_1)^2-5\cdot\Tr_2\\
z_3&=&21\cdot(\Tr_1)^3-15\cdot\Tr_1\cdot\Tr_2\\
4z_4&=&45\cdot(\Tr_1)^4 - 54\cdot(\Tr_1)^2\cdot\Tr_2 + 9\cdot(\Tr_2)^2
\end{eqnarray*}
show that $B_{\gl_2(\C)}(S^3(\C^2))=\C[\gl_2(\C)]^{\GL_2(\C)}$ as well.
\hbo}\end{exam}

The rest of this section is to calculate the coefficient algebra $B_{\sll_n(\C)}(S^d(\C^n))$ of  $\sll_n(\C)$ on
$S^d(\C^n)$.
Let us fix
$$\{E_{ij}, E_{ss}-E_{s+1,s+1}\mid 1\leqslant i\neq j\leqslant n, 1\leqslant s\leqslant n-1\}$$
as a basis of  $\sll_n(\C)$. Then the generic matrix 
\begin{eqnarray}\label{eq4.5}
X&=&\sum_{s=1}^{n-1}x_{ss}\cdot (E_{ss}-E_{s+1,s+1})+\sum_{1\leqslant i\neq j\leqslant n}x_{ij}\cdot E_{ij} \nonumber\\
 & = & \begin{pmatrix}
    x_{11} &x_{12}&x_{13}&\cdots&x_{1n}    \\
    x_{21}  &x_{22}-x_{11}&x_{23}&\cdots&x_{2n}\\  
    \vdots&\cdots&\ddots&\ddots&\vdots\\
       \vdots&\cdots&\cdots&x_{n-1,n-1}-x_{n-2,n-2}&x_{n-1,n}\\
    x_{n1}&\cdots&\cdots&x_{n,n-1}&-x_{n-1,n-1}
\end{pmatrix}.
\end{eqnarray}
and  the characteristic polynomial of $\sll_n(\C)$ on $S^d(\C^n)$ is
\begin{equation} \label{eq4.6}
\begin{aligned}
\upvarphi_{\sll_n(\C)}(S^d(\C^n))~& =~ \det\left(x_0\cdot I+[X]\right)\\
&=~x_0^m+w_1\cdot x_0^{m-1}+w_2\cdot x_0^{m-2}+\cdots+w_{m-1}\cdot x_0+w_m
\end{aligned}
\end{equation}
where $m:=\binom{n+d-1}{d}$ and $w_1,w_2,\dots,w_m\in\C[x_{ij},x_{ss}\mid 1\leqslant i\neq j\leqslant n, 1\leqslant s\leqslant n-1]$. 

\begin{proof}[Proof of Theorem \ref{thm3}]
We consider the standard surjective $\C$-algebra homomorphism 
\begin{equation}
\label{eq4.7}
\uprho:\C[x_0,x_{ij}\mid 1\leqslant i,j\leqslant n]\ra \C[x_0,x_{ij},x_{ss}\mid 1\leqslant i\neq j\leqslant n, 1\leqslant s\leqslant n-1]
\end{equation}
defined by 
$$x_{22}\mapsto x_{22}-x_{11},\dots,x_{n-1,n-1}\mapsto x_{n-1,n-1}-x_{n-2,n-2},x_{nn}\mapsto -x_{n-1,n-1}$$
and fixing other $x_{ij}$ and $x_0$. Clearly, the image of $\upvarphi_{\gl_n(\C)}(S^d(\C^n))$ is 
$\upvarphi_{\sll_n(\C)}(S^d(\C^n))$ and thus
\begin{equation}
\label{ }
\uprho(z_i)=w_i
\end{equation}
where $i=1,2,\dots,m$ and these $z_i$ are coefficients of $\upvarphi_{\gl_n(\C)}(S^d(\C^n))$.
As a consequence, the coefficient algebra $B_{\sll_n(\C)}(S^d(\C^n))$ is the image of $B_{\gl_n(\C)}(S^d(\C^n))$
under the homomorphism $\uprho$. For $i\in\{1,2,\dots,n\}$, we write $\tr_i:=\uprho(\Tr_i)$. By Theorem \ref{thm2},
we see that
$$B_{\sll_n(\C)}(S^d(\C^n))=\C\left[\uprho(\Tr_1),\uprho(\Tr_2),\dots,\uprho(\Tr_n)\right]=\C[\tr_1,\tr_2,\dots,\tr_n].$$
Note that $\Tr_1=x_{11}+x_{22}+\cdots+x_{nn}$, thus $\tr_1=x_{11}+(x_{22}-x_{11})+\cdots+(x_{n-1,n-1}-x_{n-2,n-2})+(-x_{n-1,n-1})=0.$ This implies that
$$B_{\sll_n(\C)}(S^d(\C^n))=\C[\tr_2,\tr_3,\dots,\tr_n].$$

On the other hand,  the conjugacy action of the special linear group $\SL_n(\C)$ on the underlying space of $\sll_n(\C)$ induces a $\C$-algebraic action of $\SL_n(\C)$ on the polynomial ring $\C[\sll_n(\C)]$, and the invariant ring
$\C[\sll_n(\C)]^{\SL_n(\C)}$ is minimally generated by $\{\tr_2,\tr_3,\dots,\tr_n\};$
see for example, \cite[Chapter VIII, Exercise 13]{Bou05}. Therefore, $B_{\sll_n(\C)}(S^d(\C^n))=\C[\tr_2,\tr_3,\dots,\tr_n]=\C[\sll_n(\C)]^{\SL_n(\C)}.$
\end{proof}

Theorem \ref{thm3}, together with the relationship between $\{s_1,s_2,\dots,s_n\}$ and $\{\Tr_1,\Tr_2,\dots,\Tr_n\}$ (see for example, \cite[Section 8.1.4]{Car17}), obtains the following result. 

\begin{coro}\label{coro4.5}
The characteristic polynomial of the special linear Lie algebra $\sll_n(\C)$ with respect to the standard representation $\C^n$ is
\begin{equation}
\label{ }
\upvarphi_{\sll_n(\C)}(\C^n)= x_0^n+\sum_{i=2}^n w_i\cdot  x_0^{n-i}
\end{equation}
where 
\begin{equation}
\label{ }
w_i:=\frac{1}{i!}\det\begin{pmatrix}
     0 & i-1&0&\cdots&0   \\
     \tr_2 &  0&i-2&\ddots&\vdots\\
     \tr_3&\tr_2&\ddots&\ddots&0\\
     \vdots&\ddots&\ddots&\ddots&1\\
     \tr_i&\cdots&\tr_3&\tr_2&0
\end{pmatrix}.
\end{equation}
\end{coro}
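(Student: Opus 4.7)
The plan is to specialize Theorem \ref{thm3} to $d=1$, express the characteristic polynomial of $\sll_n(\C)$ on the standard representation as the image, under the homomorphism $\uprho$ of (\ref{eq4.7}), of the characteristic polynomial of $\gl_n(\C)$ on $\C^n$, and then use the classical Newton identity to rewrite the coefficients in terms of the traces $\tr_2,\dots,\tr_n$.

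First, I would observe that for the standard representation, the generic matrix of $\gl_n(\C)$ is simply the generic $n\times n$ matrix $Y=(x_{ij})$, so that
\begin{equation*}
\upvarphi_{\gl_n(\C)}(\C^n)=\det(x_0\cdot I+Y)=x_0^n+s_1\cdot x_0^{n-1}+s_2\cdot x_0^{n-2}+\cdots+s_n,
\end{equation*}
where $s_i$ is the sum of the principal $i\times i$ minors of $Y$ appearing in Lemma \ref{lem4.1}. Applying the homomorphism $\uprho$ of (\ref{eq4.7}), which converts $Y$ into the $\sll_n$-generic matrix $X$ of (\ref{eq4.5}), then yields
\begin{equation*}
\upvarphi_{\sll_n(\C)}(\C^n)=\det(x_0\cdot I+X)=x_0^n+\sum_{i=1}^n\uprho(s_i)\cdot x_0^{n-i}.
\end{equation*}
Since $\uprho(s_1)=\uprho(\Tr_1)=\tr_1=0$, the coefficient of $x_0^{n-1}$ vanishes; setting $w_i:=\uprho(s_i)$ for $i\geqslant 2$ gives the displayed summation.

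Finally, I would invoke the classical Newton identity in its determinantal form, as recorded in \cite[Section 8.1.4]{Car17}, which expresses each $s_i$ as $\frac{1}{i!}$ times the determinant of a fixed $i\times i$ lower-Hessenberg matrix whose nonzero entries are built from $\Tr_1,\Tr_2,\dots,\Tr_i$ together with the integers $1,2,\dots,i-1$. Applying $\uprho$ entry-by-entry turns every $\Tr_j$ into $\tr_j$ and, thanks to $\tr_1=0$, kills the diagonal, producing precisely the determinant displayed in the statement. The essentially only obstacle is notational: one must match the Newton-identity matrix to the layout (diagonal zeros, superdiagonal $i-1,i-2,\dots,1$, and lower-triangular part filled with $\tr_2,\tr_3,\dots,\tr_i$ in descending indices) used in the corollary, which is a routine rewriting once the reference convention is fixed.
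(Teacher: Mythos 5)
Your proposal is correct and follows essentially the same route as the paper: the paper's proof is the one-line observation that Theorem \ref{thm3} (really just $\tr_1=\uprho(\Tr_1)=0$) combined with the classical determinantal Newton identity relating $\{s_1,\dots,s_n\}$ to $\{\Tr_1,\dots,\Tr_n\}$ from \cite[Section 8.1.4]{Car17} yields the formula, which is exactly what you spell out. Your closing remark about matching conventions is the only point of care --- the matrix in the corollary has its superdiagonal integers in the order $i-1,\dots,1$ rather than $1,\dots,i-1$, but the two determinants agree, as one can confirm for $i=2,3$ against the paper's examples.
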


We provide the following examples to illustrate how to compute $\upvarphi_{\sll_n(\C)}(\C^n)$.

\begin{exam}{\rm Suppose $n=2$. Then $X=\begin{pmatrix}
   x_1   &  x_{12}  \\
   x_{21}   & -x_1 
\end{pmatrix}$. Clearly, $\tr_1=\Tr(X)=0$ and since
$$X^2=\begin{pmatrix}
   x_1   &  x_{12}  \\
   x_{21}   & -x_1 
\end{pmatrix}\begin{pmatrix}
   x_1   &  x_{12}  \\
   x_{21}   & -x_1 
\end{pmatrix}=\begin{pmatrix}
   x_1^2+x_{12}x_{21}   &  0  \\
   0  & x_1^2+x_{12}x_{21} 
\end{pmatrix}$$
it follows that $\tr_2=\Tr(X^2)=2(x_1^2+x_{12}x_{21})$. Therefore, the characteristic polynomial is
$$\upvarphi_{\sll_2(\C)}(\C^2)= x_0^2-\frac{\tr_2}{2!}=x_0^2-(x_1^2+x_{12}x_{21}).$$
Compare this statement with the results in Section \ref{sec2.1}. \hbo
}\end{exam}

\begin{exam}{\rm Suppose $n=3$. Then $X=\begin{pmatrix}
   x_1   &  x_{12} & x_{13} \\
   x_{21}   & x_2-x_1 &x_{23}\\
   x_{31} &x_{32}&-x_2
\end{pmatrix}$ and $\tr_1=\Tr(X)=0$.  A direct calculation shows that 
$$X^2=\begin{pmatrix}
x_{12}x_{21} + x_{13}x_{31} +  x_1^2 &  x_{13}x_{32} + x_{12}x_2&   x_{13}x_1 - x_{13}x_2 + x_{12}x_{23}    \\
x_{23}x_{31} + x_{21}x_2      &  x_{23}x_{32} + x_{12}x_{21} + x_1^2 - 2x_1x_2 + x_2^2& x_{13}x_{21} - x_{23}x_1\\
x_{31}x_1 - x_{31}x_2 + x_{32}x_{21}& x_{12}x_{31} - x_{32}x_1 & x_{13}x_{31} + x_{23}x_{32} + x_2^2
\end{pmatrix}.$$
Thus $\tr_2=\Tr(X^2)=2(x_{12}x_{21}+x_{13}x_{31} + x_{23}x_{32} + x_1^2 - x_1x_2 + x_2^2).$ Similarly, computing $X^3$ obtains
$$\tr_3=\Tr(X^3)=3 (x_1x_{13}x_{31}-x_2x_{13}x_{31}+x_{12}x_{23}x_{31}+x_{13}x_{21}x_{32}-x_{23}x_{32}x_1+x_{12}x_{21}x_2+x_1^2x_2-x_1x_2^2).$$
Hence, the characteristic polynomial is 
\begin{eqnarray*}
\upvarphi_{\sll_3(\C)}(\C^3)& = & x_0^3+\frac{1}{2!}\det\begin{pmatrix}
  0    &  1  \\
   \tr_2   &  0
\end{pmatrix}\cdot x_0+\frac{1}{3!}\det\begin{pmatrix}
  0    &  2  &0\\
   \tr_2   &  0&1\\
   \tr_3&\tr_2&0
\end{pmatrix}
\end{eqnarray*}
which could be simplified as $x_0^3-\frac{\tr_2}{2}\cdot x_0+\frac{\tr_3}{3}.$
\hbo}\end{exam}

\vspace{2mm}
\noindent \textbf{Acknowledgements}. 
This research was partially supported by the University of Saskatchewan (Grant No. APEF-121159) and NSFC (Grant No. 12561003). The authors would like to thank the anonymous referee and the editor-in-chief for their careful reading, valuable suggestions and helpful  comments.



\begin{bibdiv}
  \begin{biblist}
  
  \bib{AKY21}{article}{
   author={Azari Key, Fatemeh},
   author={Yang, Rongwei},
   title={Spectral invariants for finite dimensional Lie algebras},
   journal={Linear Algebra Appl.},
   volume={611},
   date={2021},
   pages={148--170},
}
  
  \bib{Bou05}{book}{
   author={Bourbaki, Nicolas},
   title={Lie groups and Lie algebras. Chapters 7--9},
   series={Elements of Mathematics (Berlin)},
   publisher={Springer-Verlag, Berlin},
   date={2005},
}
  
  \bib{Bro93}{book}{
   author={Brown, William C.},
   title={Matrices over commutative rings},
   series={Monographs and Textbooks in Pure and Applied Mathematics},
   volume={169},
   publisher={Marcel Dekker, Inc., New York},
   date={1993},
}

\bib{CW11}{book}{
   author={Campbell, H. E. A.},
   author={Wehlau, David L.},
   title={Modular invariant theory},
   series={Encyclopaedia of Mathematical Sciences},
   volume={139},
   publisher={Springer-Verlag, Berlin},
   date={2011},
}

\bib{Car17}{book}{
   author={Carrell, James B.},
   title={Groups, matrices, and vector spaces},
   publisher={Springer, New York},
   date={2017},
}

  \bib{Car05}{book}{
   author={Carter, Roger W.},
   title={Lie algebras of finite and affine type},
   series={Cambridge Studies in Advanced Mathematics},
   volume={96},
   publisher={Cambridge University Press, Cambridge},
   date={2005},
}

\bib{CCZ21}{article}{
   author={Chang, Hongliang},
   author={Chen, Yin},
   author={Zhang, Runxuan},
   title={A generalization on derivations of Lie algebras},
   journal={Electron. Res. Arch.},
   volume={29},
   date={2021},
   number={3},
   pages={2457--2473},
}

 \bib{CCD19}{article}{
   author={Chen, Zhiqi},
   author={Chen, Xueqing},
   author={Ding, Ming},
   title={On the characteristic polynomial of $\germ{sl}(2,\Bbb{F})$},
   journal={Linear Algebra Appl.},
   volume={579},
   date={2019},
   pages={237--243},
} 

  \bib{Che25}{article}{
   author={Chen, Yin},
   title={Some Lie algebra structures on symmetric powers},
   journal={Amer. Math. Monthly},
  volume={132},
   date={2025},
   number={2},
   pages={150--161},
}

\bib{CDG20}{article}{
   author={Chen, Yin},
   author={Du, Rong},
   author={Gao, Yun},
   title={Modular quotient varieties and singularities by the cyclic group
   of order $2p$},
   journal={Comm. Algebra},
   volume={48},
   date={2020},
   number={12},
   pages={5490--5500},
}

\bib{CT19}{article}{
   author={Chen, Yin},
   author={Tang, Zhongming},
   title={Vector invariant fields of finite classical groups},
   journal={J. Algebra},
   volume={534},
   date={2019},
   pages={129--144},
}

\bib{CZ23a}{article}{
   author={Chen, Yin},
   author={Zhang, Xinxin},
   title={A class of quadratic matrix equations over finite fields},
   journal={Algebra Colloq.},
   volume={30},
   date={2023},
   number={1},
   pages={169--180},
}

\bib{CZ23b}{article}{
   author={Chen, Yin},
   author={Zhang, Runxuan},
   title={A commutative algebra approach to multiplicative Hom-Lie algebras},
   journal={Linear Multilinear Algebra},
   volume={71},
   date={2023},
   number={7},
   pages={1127--1144},
}

\bib{CZ24}{article}{
   author={Chen, Yin},
   author={Zhang, Runxuan},
   title={Shape enumerators of self-dual NRT codes over finite fields},
   journal={SIAM J. Discrete Math.},
   volume={38},
   date={2024},
   number={4},
   pages={2841--2854},
}

\bib{DK15}{book}{
   author={Derksen, Harm},
   author={Kemper, Gregor},
   title={Computational invariant theory},
   series={Encyclopaedia of Mathematical Sciences},
   volume={130},
   edition={Second enlarged edition},
   publisher={Springer, Heidelberg},
   date={2015},
}

\bib{FH91}{book}{
   author={Fulton, William},
   author={Harris, Joe},
   title={Representation theory},
   series={Graduate Texts in Mathematics},
   volume={129},
   publisher={Springer-Verlag, New York},
   date={1991},
   pages={xvi+551},
}

\bib{GLW24}{article}{
   author={Geng, Amin},
   author={Liu, Shoumin},
   author={Wang, Xumin},
   title={Characteristic polynomials and finite dimensional representations
   of simple Lie algebras},
   journal={New York J. Math.},
   volume={30},
   date={2024},
   pages={24--37},
}

\bib{HZ19}{article}{
   author={Hu, Zhiguang},
   author={Zhang, Philip B.},
   title={Determinants and characteristic polynomials of Lie algebras},
   journal={Linear Algebra Appl.},
   volume={563},
   date={2019},
   pages={426--439},
}

\bib{HY24}{article}{
   author={Howell, Kate},
   author={Yang, Rongwei},
   title={The characteristic polynomial of projections},
   journal={Linear Algebra Appl.},
   volume={680},
   date={2024},
   pages={170--182},
}

\bib{Jac79}{book}{
   author={Jacobson, Nathan},
   title={Lie algebras},
   note={Republication of the 1962 original},
   publisher={Dover Publications, Inc., New York},
   date={1979},
}

\bib{JL22}{article}{
   author={Jiang, Tianyi},
   author={Liu, Shoumin},
   title={Characteristic polynomials and finitely dimensional
   representations of $\germ{sl}(2,\Bbb C)$},
   journal={Linear Algebra Appl.},
   volume={647},
   date={2022},
   pages={78--88},
}

\bib{Kem96}{article}{
   author={Kemper, Gregor},
   title={Calculating invariant rings of finite groups over arbitrary
   fields},
   journal={J. Symbolic Comput.},
   volume={21},
   date={1996},
   number={3},
   pages={351--366},
}

\bib{KKSW24}{article}{
   author={Korkeathikhun, Korkeat},
   author={Khuhirun, Borworn},
   author={Sriwongsa, Songpon},
   author={Wiboonton, Keng},
   title={More on characteristic polynomials of Lie algebras},
   journal={J. Algebra},
   volume={643},
   date={2024},
   pages={294--310},
}

\bib{Kra17}{book}{
   author={Kraft, Hanspeter},
   title={Algebraic transformation groups - an introduction},
   note={available at: \texttt{https://kraftadmin.wixsite.com/hpkraft}},
   publisher={Lecture notes for courses},
   date={2017},
}

\bib{KP96}{book}{
   author={Kraft, Hanspeter},
   author={Procesi, Claudio},
   title={Classical invariant theory - a primer},
   note={available at: \texttt{https://kraftadmin.wixsite.com/hpkraft}},
   publisher={Lecture notes for courses},
   date={1996},
}

\bib{Mac95}{book}{
   author={Macdonald, Ian G.},
   title={Symmetric functions and Hall polynomials},
   series={Oxford Mathematical Monographs},
   edition={2},
   publisher={The Clarendon Press, Oxford University Press, New York},
   date={1995},
   pages={x+475},
}

\bib{Stu08}{book}{
   author={Sturmfels, Bernd},
   title={Algorithms in invariant theory},
   series={Texts and Monographs in Symbolic Computation},
   edition={2},
   publisher={SpringerWienNewYork, Vienna},
   date={2008},
}

  \end{biblist}
\end{bibdiv}
\raggedright
\end{document}